\newcommand{\mybox}[1]{\begin{tabular}{c}#1\end{tabular}}
\newcommand{\red}{$^\dagger$}
\newcommand{\odd}{$^1$}
\newcommand{\even}{$^0$}
\newcommand{\unip}{$^u$}
\providecommand{\mk}{\cellcolor[gray]{.8}}
\renewcommand{\ge}{\geqslant}
\renewcommand{\le}{\leqslant}
\newcommand{\ord}{\mathop{\mathrm{ord}}}
\newcommand{\sym}{\mathcal{S}}
\def\eref#1{$(\ref{#1})$}
\def\sref#1{\S$\ref{#1}$}
\def\lref#1{Lemma~$\ref{#1}$}
\def\tref#1{Theorem~$\ref{#1}$}
\def\Tref#1{Table~$\ref{#1}$}
\def\iref#1{{\ref{#1}}}
\def\({\bigl(} \def\){\bigr)}
\def\card#1{\mathopen|#1\mathclose|}
\def\Card#1{\bigl|#1\bigr|}
\def\Z{\mathbb{Z}}
\def\id{\varepsilon}
\def\narrowcols{ \addtolength{\arraycolsep}{-1.5pt}\begin{footnotesize} }
\def\normalcols{ \end{footnotesize}\addtolength{\arraycolsep}{1.5pt} }
\def\verynarrowcols{ \addtolength{\arraycolsep}{-3pt}\renewcommand{\arraystretch}{0.8}\begin{scriptsize} }
\def\verynormalcols{ \end{scriptsize}\addtolength{\arraycolsep}{3pt}\renewcommand{\arraystretch}{1.25}}
\def\tspacer{{\vrule height 2.25ex width 0ex depth0ex}}
\def\tdot{{\cdot}}
\newtheorem{theorem}{Theorem}[section]
\newtheorem{lemma}{Lemma}[section]
\begin{document}

\title{Enumeration of Latin squares with conjugate symmetry}

\author{Brendan D. McKay\thanks{Research supported by ARC grant DP190100977.}\\
 \small School of Computing\\[-0.5ex]
  \small Australian National University\\[-0.5ex]
  \small ACT 2601, Australia\\
  \small\tt brendan.mckay@anu.edu.au
  \and
Ian M. Wanless\thanks{Research supported by ARC grant DP150100506.}\\
  \small School of Mathematics\\[-0.5ex]
  \small Monash University\\[-0.5ex]
  \small Vic 3800, Australia\\
  \small\tt ian.wanless@monash.edu
}

\date{}

\maketitle

\begin{abstract}
A Latin square has six conjugate Latin squares obtained by
uniformly permuting its (row, column, symbol) triples. We say that a Latin
square has conjugate symmetry if at least two of its six conjugates are equal.
We enumerate Latin squares with conjugate symmetry and classify them according
to several common notions of equivalence. We also do similar enumerations
under additional hypotheses, such as assuming the Latin square is
reduced, diagonal, idempotent or unipotent.

Our data corrected an error in earlier literature and suggested
several patterns that we then found proofs for, including (1) The
number of isomorphism classes of semisymmetric idempotent Latin
squares of order $n$ equals the number of isomorphism classes of
semisymmetric unipotent Latin squares of order $n+1$, and (2) Suppose
$A$ and $B$ are totally symmetric Latin squares of order
$n\not\equiv0\bmod3$.  If $A$ and $B$ are paratopic then $A$ and $B$
are isomorphic.  
\end{abstract}

\section{Introduction}\label{Sintro}

A \emph{Latin square\/} is a matrix of order $n$ in which each row and
column is a permutation of some (fixed) symbol set of size $n$.
Throughout, we will assume that the symbol set is also used to index
the rows and columns. The symbols will be $\{1,2,\dots,n\}$ unless
specified otherwise. It is sometimes convenient to think of a Latin
square of order $n$ as a set of $n^2$ triples of the form (row,
column, symbol). The Latin property means that distinct triples never
agree in more than one coordinate. Latin squares are well known to be
equivalent to operation tables of finite \emph{quasigroups}. We will
usually state our results in terms of Latin squares but will
occasionally mention the corresponding interpretation in terms of
quasigroups.  See \cite{DKI,KD15} for background and terminology
regarding Latin squares and quasigroups.

For each Latin square there are six conjugate squares obtained by
uniformly permuting the coordinates of each triple.  These conjugates
can be labelled by a permutation giving the new order of the
coordinates, relative to the former order of $(1,2,3)$. For example,
the $(1,2,3)$-conjugate is the square itself and the
$(2,1,3)$-conjugate is its transpose.  We say that a square possesses
a \emph{conjugate symmetry} if at least two of the square's conjugates
are equal. The number of equal conjugates must be a divisor of 6.  A
square is said to be \emph{totally symmetric\/} if all six of its
conjugates are equal. It is \emph{semisymmetric} if it is equal to (at
least) three of its conjugates, which must necessarily include the
$(1,2,3)$, $(3,1,2)$ and $(2,3,1)$-conjugates. It is \emph{symmetric}
if it equals its $(2,1,3)$-conjugate. Any square which equals exactly
two of its conjugates has exactly two conjugates which are symmetric.

In this paper we count all of the Latin squares of small order
which have a conjugate symmetry. By the above comments, it is
sufficient to count the totally symmetric, semisymmetric and symmetric
Latin squares.
 
Let $\sym_n$ denote the symmetric group on $\{1,2,\dots,n\}$ and $\id$
denote the identity element in~$\sym_n$. The \emph{cycle structure} of
a permutation is a list of its cycle lengths in decreasing order,
using exponent notation to denote multiplicity. For example, one
permutation with the cycle structure $3^2\tdot2\tdot1^3$ is
$(1,2,3)(4,5,6)(7,8)(9)(10)(11)\in\sym_{11}$.  For any
$\alpha\in\sym_n$ we use $\ord(\alpha)$ to denote the \emph{order} of
$\alpha$ in $\sym_n$, which is the least common multiple of its cycle
lengths. We will write the image of $i$ under $\alpha$ as $i^\alpha$.

For $(\alpha,\beta,\gamma)\in\sym_n\times\sym_n\times\sym_n$ we can
apply $\alpha,\beta,\gamma$ to, respectively, the rows, columns and
symbols of a Latin square $L$. This operation is called
\emph{isotopism}. The resulting Latin square is \emph{isotopic} to $L$
and is written $L(\alpha,\beta,\gamma)$. If $\alpha=\beta=\gamma$ the
isotopism is an \emph{isomorphism}, if $\gamma=\id$ then the isotopism
is \emph{principal}, and if $\alpha=\beta$ then it is an
\emph{rrs-isotopism}.

Isotopism gives an action of $\sym_n\times\sym_n\times\sym_n$ on the
set of Latin squares of order $n$. On the same set there is also an
action of $\sym_n\wr\sym_3$, called \emph{paratopism}, which combines
an isotopism with taking a conjugate. The stabiliser of a Latin square
$L$ under isomorphism, isotopism and paratopism are, respectively, its
\emph{automorphism group}, \emph{autotopism group} and its
\emph{autoparatopism group}.  The set of squares isomorphic, isotopic,
rrs-isotopic and paratopic to $L$ are, respectively, the
\emph{isomorphism class}, \emph{isotopism class},
\emph{rrs-isotopism class} and \emph{species} of~$L$. Species are
sometimes known as \emph{main classes}. A useful observation that
follows directly from the definitions is:

\begin{lemma}\label{l:TSspeciso}
  The species of a totally symmetric Latin square contains a single
  isotopism class.
\end{lemma}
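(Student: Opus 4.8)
The plan is to exploit the standard decomposition of a species into isotopism classes indexed by the six conjugates, and then observe that total symmetry collapses this decomposition to a single class.

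First I would recall how paratopism relates to isotopism and conjugation. An element of $\sym_n\wr\sym_3$ applies an isotopism together with a conjugation, so every square in the species of $L$ is obtained from $L$ by taking a conjugate and applying an isotopism. The key elementary fact is that these two operations essentially commute: if one applies an isotopism $(\alpha,\beta,\gamma)$ to $L$ and then forms the $\sigma$-conjugate for some $\sigma\in\sym_3$, the result equals the isotope of the $\sigma$-conjugate of $L$ obtained by permuting the components $\alpha,\beta,\gamma$ according to $\sigma$. This is immediate from the triple description: an isotopism relabels the entries within each coordinate, whereas a conjugation permutes the coordinates themselves, and these act on disjoint data.

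Consequently, every square in the species of $L$ is an isotope of one of the six conjugates of $L$, and conversely. In other words, the species of $L$ is precisely the union, taken over the six conjugates $M$ of $L$, of the isotopism class of $M$. In general this union may comprise up to six distinct isotopism classes. To finish, I would invoke total symmetry directly: by definition all six conjugates of a totally symmetric square $L$ are equal, and since the $(1,2,3)$-conjugate is $L$ itself, every conjugate equals $L$. Hence each of the six isotopism classes in the union is the isotopism class of $L$, so the species is a single isotopism class.

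The only point requiring any care is the commuting relation between conjugation and isotopism, which is what justifies expressing the species as the union of exactly those six isotopism classes; this is genuinely routine, and I expect it to be the sole (minor) obstacle. Everything else is a direct substitution using the definition of total symmetry, which is why the statement is described as following directly from the definitions.
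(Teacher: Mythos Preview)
Your argument is correct and is exactly the natural unpacking of the paper's one-line justification that the lemma ``follows directly from the definitions.'' The paper gives no further detail, so your decomposition of the species as the union of the isotopism classes of the six conjugates, followed by the observation that total symmetry collapses these to one, is precisely the intended reasoning.
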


We gave two definitions of a Latin square above; first the standard
definition, then an equivalent definition in terms of triples. A third
way to think of an $n\times n$ Latin square is in terms of an $n\times
n\times n$ array of zeroes and ones. The $(i,j,k)$-th entry of the
array is zero unless the Latin square has symbol $k$ in column $j$ of
row $i$. The result is a 3-dimensional analogue of a permutation
matrix, which we call a \emph{permutation cube} (although the reader
is warned that this term has another meaning, see \cite{DKI}).

The conjugate symmetries studied in this paper each have a neat
geometric interpretation in terms of permutation cubes. Symmetric
Latin squares are those whose permutation cube possesses a reflective
symmetry in the plane through the vertices $(1,1,1)$, $(1,1,n)$,
$(n,n,1)$ and $(n,n,n)$ of the cube. Semisymmetric Latin squares are
those whose permutation cube possesses a threefold rotational symmetry
about the axis through vertices $(1,1,1)$ and $(n,n,n)$ of the cube.
Totally symmetric squares are those whose permutation cubes possess
both the above symmetries.

A \emph{reduced Latin square} is one in which the elements in the
first row and column occur in order. A reduced Latin square is the
operation table of a quasigroup in which 1 is an identity element. A
quasigroup with an identity element is called a \emph{loop}.  A
Latin square is {\em unipotent} if the symbols on the main diagonal
are all the same. It is \emph{diagonal} if the symbols on its main
diagonal are distinct.  It is \emph{idempotent} if it is diagonal and
the symbols occur in natural order down the main diagonal. More
generally, an element $i$ is idempotent if $i$ occurs in cell
$(i,i)$. At several subsequent points we will use that the number of
idempotent elements is an isomorphism invariant. We will also
repeatedly use the following simple consequences of the definitions.

\begin{lemma}\label{l:obvious}\mbox{ }
  \begin{enumerate}[label={\rm(\roman*)}]\itemsep=0pt
  \item\label{i:diagsym}
    A symmetric Latin square is diagonal if and only if it has odd order.
  \item\label{i:obvidemdiag} Idempotent Latin squares are necessarily diagonal.
  \item\label{i:obvuninotdiag} A unipotent Latin square of order
    $n\ge2$ cannot be diagonal or idempotent.
  \item\label{i:obvidemnotred} There are no reduced idempotent Latin
    squares of order $n\ge2$.
  \item\label{i:obvredsym} Every symmetric Latin square can be mapped
    to a reduced square by applying a unique permutation to the
    symbols.
  \item\label{i:obvredsemi} A semisymmetric reduced Latin square is
    necessarily unipotent.
  \end{enumerate}
\end{lemma}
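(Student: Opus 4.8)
The plan is to dispatch the six parts separately, noting that \iref{i:diagsym} carries the only genuine argument (a parity count), while \iref{i:obvidemdiag}, \iref{i:obvuninotdiag} and \iref{i:obvidemnotred} fall straight out of the definitions and \iref{i:obvredsym}, \iref{i:obvredsemi} each rest on a single structural observation. For \iref{i:diagsym} I would fix a symbol $k$ and track its occurrences in $L=(L_{ij})$. Symmetry means $L_{ij}=k$ with $i\neq j$ forces $L_{ji}=k$, so the off-diagonal occurrences of $k$ pair up, and since $k$ occurs exactly $n$ times in all, the number of diagonal cells holding $k$ is congruent to $n$ modulo $2$. If $n$ is odd, every symbol then occupies an odd, hence positive, number of diagonal cells; with $n$ symbols and $n$ diagonal cells each must occur exactly once, so $L$ is diagonal. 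If $n$ is even, every symbol occupies an even number of diagonal cells, so none occupies exactly one, and $L$ is not diagonal.

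Parts \iref{i:obvidemdiag}, \iref{i:obvuninotdiag} and \iref{i:obvidemnotred} are then immediate. Idempotence includes diagonality by definition, giving \iref{i:obvidemdiag}. A unipotent square has all its diagonal entries equal whereas a diagonal square has them all distinct, which is incompatible for $n\ge2$; since idempotence forces diagonality this also excludes idempotence, giving \iref{i:obvuninotdiag}. For \iref{i:obvidemnotred}, a reduced square has $L_{12}=2$ while an idempotent square has $L_{22}=2$, placing two copies of the symbol $2$ in column~$2$, impossible for $n\ge2$.

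For \iref{i:obvredsym}, symmetry gives $L_{1j}=L_{j1}$, so the first row and first column of $L$ coincide. Applying to the symbols the permutation $\sigma$ determined by $(L_{1j})^\sigma=j$ makes the first row read $1,2,\dots,n$; as a symbol permutation preserves symmetry, the transformed square is still symmetric, whence its first column again equals its first row and is likewise in natural order, so the square is reduced. The requirement that the first row be ordered pins down $\sigma$ on every symbol, giving uniqueness. For \iref{i:obvredsemi} I would work with triples, using that semisymmetry means the triple set of $L$ is closed under cyclic rotation of coordinates. A reduced square contains $(i,1,i)$ for each $i$ (first column in natural order), and a cyclic rotation of this triple is $(i,i,1)$; closure therefore yields $L_{ii}=1$ for all $i$, so $L$ is unipotent.

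I expect the only real obstacle to be the parity argument in \iref{i:diagsym}; everything else is book-keeping once the definitions are unwound. The one subtlety in \iref{i:obvredsemi} is getting the direction of the conjugacy action right, so that a cyclic rotation of $(i,1,i)$ genuinely lands in the diagonal cell $(i,i)$ rather than reproducing a first-row or first-column triple.
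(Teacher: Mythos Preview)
Your proof is correct and follows essentially the same approach as the paper's. The only cosmetic difference is in part \iref{i:obvredsemi}: the paper starts from the first-row triples $(1,i,i)$ and rotates to $(i,i,1)$, whereas you start from the first-column triples $(i,1,i)$; both work equally well under the cyclic closure defining semisymmetry.
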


\begin{proof}
Part \iref{i:diagsym} follows from the observation that off-diagonal
entries in a symmetric Latin square come in pairs. Parts
\iref{i:obvidemdiag} and \iref{i:obvuninotdiag} are immediate from the
definitions.  If $n\ge2$ then any reduced Latin square of order $n$
contains the triple $(1,2,2)$, which is incompatible with having the
triple $(2,2,2)$ that is required by idempotent Latin squares of order
$n$, hence \iref{i:obvidemnotred} holds.  Any symmetric Latin square
can be mapped to a reduced square by permuting the symbols to get the
first row in order (and hence also the first column in order).  This
operation preserves symmetry, which implies \iref{i:obvredsym}.

It remains to justify \iref{i:obvredsemi}.  Any reduced semisymmetric
Latin square of order $n$ has the triples $(1,i,i)$ for $1\le i\le n$
because it is reduced.  Semisymmetry then requires the triples
$(i,i,1)$ for $1\le i\le n$ to be present, which means the square is
unipotent.
\end{proof}

\lref{l:obvious}\iref{i:obvredsym} shows that the numbers of
rrs-isotopism classes, isotopism classes and species, respectively, of
symmetric Latin squares equal the numbers of rrs-isotopism classes,
isotopism classes and species containing symmetric reduced Latin
squares. The analogous property does not hold for isomorphism
classes, as we will see in \Tref{T:comloop}.

\Tref{T:summary} presents a summary of the classes that are enumerated
in this paper. In order to read the table, the following notes are important.
\begin{itemize}[itemsep=0pt]
\item There are 3 sections of the table, one for each of the conjugate
  symmetries (symmetric, semisymmetric and totally symmetric).
\item The first row in each section covers the conjugate symmetry
  without any further assumptions. Subsequent rows in the section add
  extra assumptions, as specified in the first column. We refer to the
  conjugate symmetry together with any further restrictions added in
  the first column, as the ``category'' of Latin squares being
  counted.
\item Columns (after the first) specify what classes are being counted,
  whether it is isomorphism classes, rrs-isotopism classes, isotopism
  classes, species or all squares. Another viewpoint for the last
  column is that it counts ``labelled'' Latin squares, meaning that
  they count as different unless they are equal as matrices.
\item Each entry in the table gives a reference which says which table
  to look at in this paper for an enumeration of the class (as
  specified at the top of the column) of Latin squares of order $n$ in
  the category (as determined by the row).  In many cases the
  reference includes mention of a result or results which justify our
  claim that the indicated table contains the desired numbers.
\item
  Where a reference is given to Table X, the column of Table X that
  should be consulted will have the same heading as the column
  containing the reference, unless the reference specifies another
  column name.
\item
  In some cases the reference includes a \red\ which means that the
  order needs to be shifted down by 1.  In other words, to get the
  numbers for order $n$ Latin squares (in the desired category), you should
  look at order $n-1$ in the referenced table.
\item
  Superscripts on the category title indicate further information as
  follows:
  \begin{itemize}
    \item[] \odd \ means that category is nonempty only for odd orders.    
    \item[] \even \ means that category is nonempty only for even orders.
    \item[] \unip \ means that squares in that category are
      necessarily unipotent, by \lref{l:obvious}\iref{i:obvredsemi}.
  \end{itemize}
  The first two of these are an important caveat when following a
  reference for a category with that superscript, and the third is
  helpful when understanding the justification we provide for such a
  reference.
\end{itemize}

Here and throughout our paper we adopt a convention for the many
tables that present counts of classes of Latin squares with particular
properties. A class will be counted if it contains any Latin square
with the named properties. So, for example, a column headed
``species'' in a table of symmetric Latin squares would count any
species containing at least one symmetric Latin square, even though
only some of the Latin squares in the species are symmetric. In text
we shorten ``class containing at least one'' to ``class containing''
and we use ``class of'' for the case when every object in the class
has the pertinent property.

\Tref{T:summary} covers all combinations of interest among these
properties: symmetric, semi\-symmetric, totally symmetric, reduced,
diagonal, idempotent, and unipotent.  Guided by \lref{l:obvious}, we
do not list the following categories in \Tref{T:summary}: idempotent
diagonal, idempotent reduced, idempotent unipotent or unipotent
diagonal. Moreover, for semisymmetric and totally symmetric Latin
squares we do not combine reduced with any of unipotent, idempotent or
diagonal. Nor did we count rrs-isotopism classes except for symmetric
Latin squares.  The notion of rrs-isotopism is particular to the
symmetric case since it is the strongest form of paratopism which
respects symmetry.  It could be argued that, by the same logic, we
should not count isotopism classes or species for any of our conjugate
symmetries. However, isotopism classes and species are extremely
widely studied notions in the Latin squares literature. Also, in the
authors' experience, isotopism classes or species containing Latin
squares with conjugate symmetry are often extremal for properties
which have no apparent connection to their symmetry.  See
\cite{EW12,cycsw} for a number of examples. This makes catalogues of
isotopism classes or species that contain Latin squares with conjugate
symmetry useful.

\begin{landscape}
\begin{table}[p]
\small{%
\centering
\def\qd{\kern 0.0em }
\newdimen\digitwidth \setbox0=\hbox{\rm0} \digitwidth=\wd0
\catcode`@=\active \def@{\kern\digitwidth} \centerline{
\vbox{\offinterlineskip \hrule 
\halign{&\vrule#&\strut
        \qd\hfil\mybox{#}\hfil\qd&\vrule#&
        \qd\hfil\mybox{#}\hfil\qd&\vrule#&
        \qd\hfil\mybox{#}\hfil\qd&\vrule#&
        \qd\hfil\mybox{#}\hfil\qd&\vrule#&
        \qd\hfil\mybox{#}\hfil\qd&\vrule#&
        \qd\hfil\mybox{#}\hfil\qd&\vrule#\cr
height2pt&\omit&&&&&&&&&&&\cr
&&&isomorphism\\ classes&&rrs-isotopism\\ classes&&isotopism\\ classes&&species&&all squares&\cr
\hline
&{\bf Symmetric:}\hfill\hfill&&\Tref{T:comloop}&&\Tref{tabsym}&&\Tref{tabsym} species\\ \lref{l:isotsymm}&&\Tref{tabsym}&&$n!$(\Tref{tabsym} reduced)\\\lref{l:obvious}\iref{i:obvredsym}&\cr
\cline{3-12}
&idempotent\odd&&\Tref{tabsym} species\\ \tref{t:idemSLS}\iref{i:idemiso}&&\Tref{tabsym} species\\ \tref{t:idemSLS}\iref{i:idemrrs}&&\Tref{tabsym} species\\ \tref{t:idemSLS}\iref{i:idemisot}&&\Tref{tabsym}\\ \tref{t:idemSLS}\iref{i:idemsp}&&\Tref{tabsym} reduced\\ \tref{t:uniSLS}\iref{u:prolong}&\cr
\cline{3-12}
&unipotent\even&&\Tref{T:comloop}\red\\ \tref{t:uniSLS}\iref{u:prolongiso}&&\Tref{T:unipsym}\\ \tref{t:uniSLS}\iref{u:allhavered}&&\Tref{T:unipsym} species\\ \tref{t:uniSLS}\iref{u:specisot},\iref{u:allhavered}&&\Tref{T:unipsym}\\ \tref{t:uniSLS}\iref{u:allhavered}&&$n!$(\Tref{tabsym} reduced\red)\\ \tref{t:uniSLS}\iref{u:prolonger}&\cr
\cline{3-12}
&reduced\\ unipotent\even&&\Tref{T:unipsym}&&\Tref{T:unipsym}&&\Tref{T:unipsym} species\\ \lref{l:isotsymm}&&\Tref{T:unipsym}&&\Tref{tabsym} reduced\red\\ \tref{t:uniSLS}\iref{u:prolong}&\cr
\cline{3-12}
&reduced&&\Tref{T:comloop} loops&&\Tref{tabsym}\\\lref{l:obvious}\iref{i:obvredsym}&&\Tref{tabsym} species\\Lemmas \ref{l:obvious}\iref{i:obvredsym}, \ref{l:isotsymm}&&\Tref{tabsym}\\\lref{l:obvious}\iref{i:obvredsym}&&\Tref{tabsym} reduced&\cr
\cline{3-12}
&diagonal\odd&&\Tref{T:comloop}\\\lref{l:obvious}\iref{i:diagsym}&&\Tref{tabsym}\\\lref{l:obvious}\iref{i:diagsym}&&\Tref{tabsym} species\\ Lemmas \ref{l:obvious}\iref{i:diagsym}, \ref{l:isotsymm}&&\Tref{tabsym}\\\lref{l:obvious}\iref{i:diagsym}&&$n!$(\Tref{tabsym} reduced)\\\lref{l:obvious}\iref{i:diagsym},\iref{i:obvredsym}&\cr
\cline{3-12}
&reduced\\ diagonal\odd&&\Tref{T:comloop} loops\\\lref{l:obvious}\iref{i:diagsym}&&\Tref{tabsym}\\\lref{l:obvious}\iref{i:diagsym},\iref{i:obvredsym}&&\Tref{tabsym} species\\Lemmas \iref{l:obvious}\iref{i:diagsym},\iref{i:obvredsym}, \ref{l:isotsymm}&&\Tref{tabsym}\\\lref{l:obvious}\iref{i:diagsym},\iref{i:obvredsym}&&\Tref{tabsym} reduced\\\lref{l:obvious}\iref{i:diagsym}&\cr
\hline\hline
&{\bf Semi-}\\{\bf symmetric:}\hfill\hfill&&\Tref{tabsemi}&&-&&\Tref{tabsemi}&&\Tref{tabsemi}&&\Tref{tabsemi}&\cr
\cline{3-12}
&idempotent&&\Tref{T:idem}&&-&&\Tref{T:idem}&&\Tref{T:idem}&&\Tref{T:idem}&\cr
\cline{3-12}
&unipotent&&\Tref{T:semisymloop}\\\lref{l:semiuniloop}&&-&&\Tref{T:semisymloop}\\\lref{l:semiuniloop}&&\Tref{T:semisymloop}\\\lref{l:semiuniloop}&&$n$(\Tref{T:semisymloop})\\\lref{l:semiuniloop}&\cr
\cline{3-12}
&reduced\unip&&\Tref{T:semisymloop}&&-&&\Tref{T:semisymloop}&&\Tref{T:semisymloop}&&\Tref{T:semisymloop}&\cr
\cline{3-12}
&diagonal&&\Tref{T:diag}&&-&&\Tref{T:diag}&&\Tref{T:diag}&&\Tref{T:diag}&\cr
\hline\hline
&{\bf Totally}\\{\bf Symmetric:}\hfill&&\Tref{tabtotsym}&&-&&\Tref{tabtotsym} species\\\lref{l:TSspeciso}&&\Tref{tabtotsym}&&\Tref{tabtotsym}&\cr
\cline{3-12}
&idempotent\odd&&\Tref{tabSTS} species\\\tref{t:STS}\iref{i:TSidemisom}&&-&&\Tref{tabSTS} species\\\tref{t:STS}\iref{i:TSidemisot}&&\Tref{tabSTS}&&\Tref{tabSTS}&\cr
\cline{3-12}
&unipotent\even&&\Tref{tabSTS} species\red\\\tref{t:STS}\iref{i:TSuniisom}&&-&&\Tref{tabSTS} species\red\\\tref{t:STS}\iref{i:TSuniisot}&&\Tref{tabSTS}\red\\\tref{t:STS}\iref{i:TSunispec}&&$n$(\Tref{tabSTS}\red)\\\lref{l:semiuniloop}&\cr
\cline{3-12}
&reduced\unip\even&&\Tref{tabSTS} species\red\\\tref{t:STS}\iref{i:TSredisom}&&-&&\Tref{tabSTS} species\red\\\tref{t:STS}\iref{i:TSredisot}&&\Tref{tabSTS}\red\\\tref{t:STS}\iref{i:TSredspec}&&\Tref{tabSTS}\red\\\tref{t:uniSLS}\iref{u:prolongTS}&\cr
\cline{3-12}
&diagonal\odd&&\Tref{tabtotsym}\\\lref{l:obvious}\iref{i:diagsym}&&-&&\Tref{tabtotsym} species\\Lemmas \ref{l:TSspeciso}, \ref{l:obvious}\iref{i:diagsym}&&\Tref{tabtotsym}\\\lref{l:obvious}\iref{i:diagsym}&&\Tref{tabtotsym}\\\lref{l:obvious}\iref{i:diagsym}&\cr
\hline
}}}}
\caption{\label{T:summary}Summary of the results of this paper}
\end{table}
\end{landscape}

The structure of this paper is as follows.  We report the results of
our enumerations of symmetric, semisymmetric and totally symmetric
Latin squares in \sref{s:sym}, \sref{s:semisym}, and \sref{s:totsym},
respectively. In each case, we will also count the Latin squares with
conjugate symmetries that have the additional properties of being
unipotent, idempotent or diagonal.
In \sref{s:Sade} we explain how our results from \sref{s:semisym}
uncovered an error in earlier literature.

All numbers reported in this paper were computed independently by the
two authors using algorithms that differed in some details. The total
CPU time taken for all of our computations ran to several months.  For
each problem we also computed small order catalogues by elementary
direct searches, to crosscheck the more sophisticated algorithms which
we needed for larger cases. We have made catalogues of many of the
Latin squares that we generated in our enumerations available online
\cite{WWWW}.

\section{Symmetric Latin squares}\label{s:sym}

In this section we count and classify symmetric Latin squares.
\Tref{tabsym} shows data for the symmetric Latin squares of order up
to 13, classified by rrs-isotopism and species. The last column counts
all reduced symmetric Latin squares.  By
\lref{l:obvious}\iref{i:obvredsym}, the number of all symmetric Latin
squares of order $n$ can be obtained by multiplying the number of
reduced symmetric Latin squares of order $n$ by $n!$, the number of
ways to permute the symbols.

\begin{table}
\centering
\def\qd{\kern 0.23em }
\newdimen\digitwidth \setbox0=\hbox{\rm0} \digitwidth=\wd0
\catcode`@=\active \def@{\kern\digitwidth} \centerline{
\vbox{\offinterlineskip \hrule 
\halign{&\vrule#&\strut
        \qd\hfil#\hfil\qd&\vrule#&
        \qd\hfil#\hfil\qd&\vrule#&
        \qd\hfil#\hfil\qd&\vrule#&
        \qd\hfil#\hfil\qd&\vrule#&
        \qd\hfil#\hfil\qd&\vrule#\cr
height2pt&\omit&&&&&&&\cr
&\mybox{order}&&\mybox{rrs-isotopism\\classes}&&\mybox{species}&&\mybox{reduced
}&\cr
height2pt&\omit&&&&&&&\cr
\noalign{\hrule height0.8pt}
height2pt&\omit&&&&&&&\cr
&2&&1&&1&&1&\cr
&3&&1&&1&&1&\cr
&4&&2&&2&&4&\cr
&5&&1&&1&&6&\cr
&6&&6&&6&&456&\cr
&7&&7&&7&&6240&\cr
&8&&423&&415&&10936320&\cr
&9&&3460&&3460&&1225566720&\cr
&10&&35878510&&35878418&&130025295912960&\cr
&11&&6320290037&&6320290037&&252282619805368320&\cr
&12&&4612966007179768&&4612965997149292&&2209617218725251404267520&\cr
&13&&15859695832489637513&&15859695832489637513&&98758655816833727741338583040&\cr
height2pt&\omit&&&&&&&\cr
}\hrule}}
\caption{\label{tabsym}Counts of symmetric Latin squares. 
The number of isotopism classes equals the number of species by 
\lref{l:isotsymm}. Also, for odd orders, the number of rrs-isotopism
classes equals the number of species, by the same result.}
\end{table}

We now explain how the numbers in \Tref{tabsym} were obtained,
starting with the numbers in the final column. These were computed
using a simple adaptation of the method used in \cite{MW05} to count
the Latin squares of order 11. Rather than building symmetric Latin
squares we built the (equinumerous) Latin squares that equal their
$(1,3,2)$-conjugate. This was achieved by adding one row at a time,
ensuring the new row was an involution (when viewed as a permutation
in image format). An alternative viewpoint is that we counted
factorisations by adding a factor at a time. In \cite{MW05} the task
was to factorise the complete bipartite graph $K_{n,n}$ into
1-factors.  In the present work we factorised the complete graph $K_n$
with a loop added to each vertex into factors consisting of 
disjoint edges which may be loops. In the case of odd $n$ each factor
had to have exactly one loop (cf.~\lref{l:obvious}\iref{i:diagsym}).

For any odd order $n$ the number of symmetric reduced Latin squares of
order $n$ is equal to the number of 1-factorisations of $K_{n+1}$ (see
e.g.~\cite{WI05}). These numbers have been computed up to $n=13$.  We
did not recompute the number for $n=13$, but instead relied on the
result quoted in \cite{KO09}. For smaller odd $n$ we did recompute the
numbers, and used the previously published values as a validation of
our code.

Next we counted the rrs-isotopism classes of symmetric Latin squares.
These numbers can be inferred by generating all symmetric Latin
squares that have a non-trivial rrs-autotopism. Any such Latin square
$L$ necessarily possesses an rrs-autotopism $(\alpha,\alpha,\gamma)$
of prime order.  The same autotopism combines with
$(2,1,3)$-conjugation to produce an autoparatopism, since $L$ is
symmetric.  We used the classifications of autotopisms
\cite{SVW12} and autoparatopisms \cite{MW17}. From those lists we
deduce that $\alpha$ and $\gamma$ must have one of the cycle structures
given in \Tref{T:agopts} and that we are at liberty to fix any choice
of $\alpha$ and $\gamma$ with the appropriate cycle
structure.  For each possible $(\alpha,\gamma)$, we generated the
number of symmetric Latin squares having $(\alpha,\alpha,\gamma)$ as
an autotopism.  These numbers are shown in \Tref{T:agopts} under the
heading ``LS''.

\def\hhline{\hline\multicolumn{3}{c}{}\\[-1.6ex]\hline}

\begin{table}
  \[\begin{array}{cccc}
  \begin{array}{|l|l|r|}
    \hline
    \multicolumn{3}{|c|}{\tspacer n=2}\\
    \hline
\alpha&\gamma&\text{LS}\\
\hline
2&1^2&1\\

\hhline

\multicolumn{3}{|c|}{\tspacer n=3}\\
    \hline
\alpha&\gamma&\text{LS}\\
\hline
2\tdot1&2\tdot1&1\\
3&3&1\\

\hhline

\multicolumn{3}{|c|}{\tspacer n=4}\\
    \hline
\alpha&\gamma&\text{LS}\\
\hline
\tspacer
2^2&2\tdot1^2&1\\
&1^4&2\\
2\tdot1^2&2\tdot1^2&2\\
3\tdot1&3\tdot1&1\\

\hhline

\multicolumn{3}{|c|}{\tspacer n=5}\\
    \hline
\alpha&\gamma&\text{LS}\\
\hline
\tspacer
2^2\tdot1&2^2\tdot1&1\\
5&5&1\\

\hhline

\multicolumn{3}{|c|}{\tspacer n=6}\\
    \hline
\alpha&\gamma&\text{LS}\\
\hline
\tspacer
2^3&2^2\tdot1^2&2\\
&2\tdot1^4&4\\
&1^6&2\\
2^2\tdot1^2&2^2\tdot1^2&6\\
3^2&3^2&2\\
&3\tdot1^3&2\\
5\tdot1&5\tdot1&1\\
\hline
  \end{array}
  \quad
  \begin{array}{|l|l|r|}
    \hline
\multicolumn{3}{|c|}{\tspacer n=7}\\
    \hline
\alpha&\gamma&\text{LS}\\
\hline
\tspacer
2^3\tdot1&2^3\tdot1&2\\
2^2\tdot1^3&2^2\tdot1^3&4\\
3^2\tdot1&3^2\tdot1&5\\
7&7&2\\

\hhline

\multicolumn{3}{|c|}{\tspacer n=8}\\
    \hline
\alpha&\gamma&\text{LS}\\
\hline
\tspacer
2^4&2^3\tdot1^2&33\\
&2^2\tdot1^4&131\\
&2\tdot1^6&96\\
&1^8&44\\
2^3\tdot1^2&2^3\tdot1^2&26\\
2^2\tdot1^4&2^2\tdot1^4&46\\
3^2\tdot1^2&3^2\tdot1^2&23\\
7\tdot1&7\tdot1&2\\

\hhline

 \multicolumn{3}{|c|}{\tspacer n=9}\\
    \hline
\alpha&\gamma&\text{LS}\\
\hline
\tspacer
2^4\tdot1&2^4\tdot1&39\\
2^3\tdot1^3&2^3\tdot1^3&101\\
3^3&3^3&13\\
3^2\tdot1^3&3^2\tdot1^3&20\\
\hline
  \end{array}
  \quad
\begin{array}{|l|l|r|}
  \hline
  \multicolumn{3}{|c|}{\tspacer n=10}\\
    \hline
\alpha&\gamma&\text{LS}\\
\hline
  \tspacer
2^5&2^4\tdot1^2&1784\\
&2^3\tdot1^4&32144\\
&2^2\tdot1^6&37784\\
&2\tdot1^8&7488\\
&1^{10}&252\\
2^4\tdot1^2&2^4\tdot1^2&9525\\
2^3\tdot1^4&2^3\tdot1^4&5434\\
3^3\tdot1&3^3\tdot1&242\\
3^2\tdot1^4&3^2\tdot1^4&67\\
5^2&5^2&15\\
5^2&5\tdot1^5&2\\

\hhline

\multicolumn{3}{|c|}{\tspacer n=11}\\
    \hline
\alpha&\gamma&\text{LS}\\
\hline
  \tspacer
2^5\tdot1&2^5\tdot1&11352\\
2^4\tdot1^3&2^4\tdot1^3&144592\\
2^3\tdot1^5&2^3\tdot1^5&0\\
5^2\tdot1&5^2\tdot1&210\\
11&11&5\\
\hline
\end{array}
\quad
\begin{array}{|l|l|r|}
  \hline
  \multicolumn{3}{|c|}{\tspacer n=12}\\
    \hline
\alpha&\gamma&\text{LS}\\
\hline
  \tspacer
2^6&2^5\tdot1^2&14530952\\
&2^4\tdot1^4&470822508\\
&2^3\tdot1^6&1556098547\\
&2^2\tdot1^8&1216169007\\
&2\tdot1^{10}&294114559\\
&1^{12}&20147679\\
2^5\tdot1^2&2^5\tdot1^2&36584824\\
2^4\tdot1^4&2^4\tdot1^4&64350427\\
2^3\tdot1^6&2^3\tdot1^6&1463416\\
3^4&3^4&44149\\
&3^3\tdot1^3&127621\\
&3^2\tdot1^6&11400\\
&3\tdot1^9&168\\
3^3\tdot1^3&3^3\tdot1^3&9867\\
5^2\tdot1^2&5^2\tdot1^2&2394\\
11\tdot1&11\tdot1&5\\

\hhline

\multicolumn{3}{|c|}{\tspacer n=13}\\
    \hline
\alpha&\gamma&\text{LS}\\
\hline
\tspacer
2^6\tdot1&2^6\tdot1&183778440\\
2^5\tdot1^3&2^5\tdot1^3&4076414984\\
2^4\tdot1^5&2^4\tdot1^5&144762344\\
3^4&3^4&9005726\\
5^2\tdot1^3&5^2\tdot1^3&11364\\
13&13&14\\
\hline
\end{array}
\end{array}
\]
\caption{\label{T:agopts}
  The number of rrs-isotopism classes containing symmetric Latin
  squares with autotopism $(\alpha,\alpha,\gamma)$ where $\alpha$ and
  $\gamma$ are permutations with the given cycle structure.
}
\end{table}

It is interesting that only one of the counts in \Tref{T:agopts} is
zero.  The classifications in \cite{SVW12} and \cite{MW17} give
necessary and sufficient conditions for autotopisms and
autoparatopisms to be achievable.  However, when we insist on
simultaneously achieving an autotopism and a separate autoparatopism,
the conditions are necessary but no longer sufficient. It is not hard
to see that no symmetric Latin square can achieve
$(\alpha,\alpha,\gamma)$ as an autotopism when
$\alpha$ and $\gamma$ both have cycle structure $2^3\tdot1^5$.
After filling in three of the symbols that are fixed by $\gamma$ there
is nowhere to place the other two such symbols.

For each entry in \Tref{T:agopts} we used one choice of permutations
$\alpha$ and $\gamma$ with the indicated cycle structures, and generated all
symmetric Latin squares possessing $(\alpha,\alpha,\gamma)$ as an
autotopism. Each time we built a square we calculated the order of its
rrs-autotopism group. If that group had order 2 and $n\ge12$ then we
counted the square but did not store it. All other generated squares
were stored. Throwing away the squares with rrs-autotopism group size
2 saved substantial disk space. For example, when $(\alpha,\gamma)$ had
cycle structure $(2^6,2^3\cdot1^6)$ there were 1553860785 squares with
group size 2, but only 2237762 squares with a larger group.
Crucially, the only way two squares from our enumeration could be
rrs-isotopic to each other is if they had a rrs-isotopism group of
order greater than 2 (since, by assumption, they possess two different
non-trivial rrs-autotopisms). Thus with the aid of our stored
catalogue we were able to find representatives of all rrs-isotopism
classes for which the rrs-autotopism group had order greater than
2. Combining with the count of all the squares we generated, we could
then infer the number of rrs-isotopism classes with an autoparatopism
group of order greater than 2, and the number of Latin squares in
those classes.  The number of rrs-isotopism classes for which the
autoparatopism group had order exactly 2 could then be inferred from
the last column of \Tref{tabsym}.

Our next task was to count species. To do that,
we will need the following result, which is an immediate consequence
of \cite[Thm~1]{MMM07}.

\begin{lemma}\label{l:class}
  If $(\alpha,\beta,\id)$ is a principal autotopism of some Latin
  square and $\alpha\ne\id$, then $\alpha$ and $\beta$ have the same
  cycle structure, and neither has fixed points.
\end{lemma}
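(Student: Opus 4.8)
The plan is to work directly with the triple description of $L$. The condition $L(\alpha,\beta,\id)=L$ says, at the level of triples, that $(r,c,s)\in L$ if and only if $(r^\alpha,c^\beta,s)\in L$; equivalently, the symbol in cell $(r^\alpha,c^\beta)$ always equals the symbol in cell $(r,c)$. I would extract both the cycle-structure claim and the fixed-point claim from this single relation.

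For the cycle structure, I would fix any symbol $s$ and record its positions by the permutation $\sigma$ defined so that $s$ lies in cell $(r,r^\sigma)$ for every row $r$; this is well defined because $s$ occurs exactly once in each row. Taking $c=r^\sigma$ in the autotopism relation places $s$ in cell $(r^\alpha,(r^\sigma)^\beta)$. But $s$ occupies the unique cell $(r^\alpha,(r^\alpha)^\sigma)$ of row $r^\alpha$, so $(r^\sigma)^\beta=(r^\alpha)^\sigma$ for every $r$. With left-to-right composition this is exactly $\sigma\beta=\alpha\sigma$, that is $\alpha=\sigma\beta\sigma^{-1}$. Hence $\alpha$ and $\beta$ are conjugate and so share a cycle structure.

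For the fixed-point claim, suppose $\alpha$ fixed some row $r$. Then the relation forces cells $(r,c)$ and $(r,c^\beta)$ to carry the same symbol for every column $c$. Since the symbols in row $r$ are all distinct, two cells of that row with a common symbol must coincide, so $c^\beta=c$ for all $c$, giving $\beta=\id$. The conjugacy $\alpha=\sigma\beta\sigma^{-1}$ then makes $\alpha=\id$, contradicting the hypothesis $\alpha\ne\id$. Thus $\alpha$ is fixed-point-free, and by the shared cycle structure so is $\beta$; alternatively, a symmetric argument beginning from a fixed column of $\beta$ yields this directly.

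I do not expect a genuine obstacle. Once the symbol-position permutation $\sigma$ is introduced, the conjugacy relation drops out immediately, and the fixed-point step is a one-line consequence of each row being a permutation of the symbols. The only point requiring care is the composition convention in $\sigma\beta=\alpha\sigma$, which merely dictates which of $\alpha,\beta$ is written on which side and affects neither conclusion.
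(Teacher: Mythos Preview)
Your argument is correct. The symbol-position permutation $\sigma$ gives the conjugacy $\sigma\beta=\alpha\sigma$ cleanly, and the fixed-point step is exactly right: a fixed row of $\alpha$ forces $\beta=\id$, whence $\alpha=\id$ by conjugacy.

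The paper's treatment differs in that it does not prove the lemma at all; it simply records the statement as ``an immediate consequence of \cite[Thm~1]{MMM07}''. That cited theorem is a more general structural result about autotopisms of Latin squares, from which this lemma falls out as a special case. Your route is therefore genuinely different: it is a short, self-contained argument tailored to the principal case $\gamma=\id$, avoiding any external machinery. The trade-off is the usual one---the paper's citation situates the lemma within a broader known theory, while your direct proof makes the present paper independent of that reference for this fact and is arguably more illuminating for this specific statement.
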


Define $\Omega_n$ to be the set of symmetric Latin squares of order
$n$ that possess an autotopism of the form $(\theta,\theta^{-1},\id)$
where $\theta\in\sym_n$ is semiregular of prime order (meaning
$\theta$ has no fixed points and each cycle of $\theta$ has the same
length $p$, where $p$ is prime).

\begin{lemma}\label{l:isotsymm}
Suppose $A$ and $B$ are paratopic symmetric Latin squares of order $n$.  
Then $A$ and $B$ are isotopic. Also, if $A$ and $B$ are 
not rrs-isotopic then $n$ is even and $A,B\in\Omega_n$. 
\end{lemma}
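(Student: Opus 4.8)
The plan is to prove the two assertions separately: first collapse paratopy to isotopy, then analyse when the isotopy can be chosen to respect the symmetry. For the first assertion I would use that $A=A^{(2,1,3)}$, so the six conjugates of $A$ collapse to at most three distinct squares: $A$ itself, the $(1,3,2)$-conjugate $P$, and the transpose $Q$ of $P$ (which is again one of the six conjugates). Since $B$ is paratopic to $A$ it is isotopic to one of $A,P,Q$; if it is isotopic to $A$ we are done. Otherwise, say $B$ is isotopic to $P$; being symmetric, $B$ equals its transpose, so $B$ is also isotopic to $Q$, and hence $P$ and $Q$ lie in a common isotopism class. To finish I would transport the symmetry of $B$ back to $A$: the autoparatopism $(\id,\id,\id;(2,1,3))$ of $B$, conjugated through an isotopism $B\to P$ and then through the conjugacy $P=A^{(1,3,2)}$, yields an autoparatopism of $A$ whose conjugate part is $(1,3,2)(2,1,3)(1,3,2)$. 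This is a transposition different from $(2,1,3)$, and together with $(2,1,3)$ (which stabilises $A$ because $A$ is symmetric) it generates all of $\sym_3$. Thus every conjugacy fixes the isotopism class of $A$; in particular $A$ is isotopic to $P$, and so to $B$.

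For the second assertion I would fix an isotopism with $B=A(\alpha,\beta,\gamma)$ and transpose this identity. Since transposing an isotopism swaps its row and column components, and since $A=A^{(2,1,3)}$ and $B=B^{(2,1,3)}$, I get $A(\alpha,\beta,\gamma)=A(\beta,\alpha,\gamma)$, so that $(\theta,\theta^{-1},\id)$ is a principal autotopism of $A$, where $\theta=\alpha\beta^{-1}$. Now $A$ and $B$ are rrs-isotopic precisely when some isotopism between them has equal first two components, i.e. $\theta=\id$. Assuming they are not rrs-isotopic, every isotopism (in particular the chosen one) has $\theta\neq\id$, and \lref{l:class} forces $\theta$ to be fixed-point-free. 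Passing to the power $\psi=\theta^{\ord(\theta)/p}$ for a prime $p\mid\ord(\theta)$ gives a principal autotopism $(\psi,\psi^{-1},\id)$ with $\psi\neq\id$; by \lref{l:class} again $\psi$ has no fixed points, and since it has order $p$ it is semiregular of prime order. Hence $A\in\Omega_n$, and applying the same reasoning to the inverse isotopism $B\to A$ (whose discrepancy $\alpha^{-1}\beta$ is also $\neq\id$) gives $B\in\Omega_n$.

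It remains to force $n$ even, equivalently to show that for odd $n$ the non-rrs case cannot occur. The dichotomy I would exploit is \lref{l:obvious}\iref{i:diagsym}: for odd $n$ every symmetric square is diagonal, so $i\mapsto A(i,i)$ is a bijection, and applying its inverse to the symbols — an rrs-isotopism (trivial on rows and columns) that preserves symmetry — lets me assume $A$ and $B$ are idempotent. For even $n$ this normalisation is unavailable, since symmetric squares may be unipotent; that is exactly where the $\Omega_n$ exception lives. With $A$ idempotent and symmetric, the autotopism $(\theta,\theta^{-1},\id)$ sends the diagonal triple $(i,i,i)$ to $(i^\theta,i^{\theta^{-1}},i)$, which forces each $\theta$-orbit to carry an order-$p$ idempotent symmetric subsquare and the orbits to form a block system. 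Because $\ord(\theta)$ is odd, $\theta^{-1}$ is an even power of $\theta$, so $\langle\theta^2\rangle=\langle\theta\rangle$ and the discrepancy that must be cancelled lies in this group; the block rigidity should then let me realise the cancellation by an autotopism of $A$, producing an isotopism with $\alpha=\beta$ and contradicting the non-rrs assumption.

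The step I expect to be the main obstacle is this last one. The discrepancy to be cancelled, $\alpha^{-1}\beta$, is a conjugate of $\theta^{-1}$ by $\alpha$, whereas the powers $(\theta^{j},\theta^{-j},\id)$ only supply discrepancies lying in $\langle\theta^2\rangle$ directly; bridging the gap requires showing that, after the idempotent normalisation, $\alpha$ may be taken to normalise $\langle\theta\rangle$ — or, more robustly, that the block structure imposed by $(\theta,\theta^{-1},\id)$ rigidifies every isotopism between idempotent symmetric squares of odd order into an isomorphism. Establishing this rigidity, and hence that odd order precludes the non-rrs case, is where the real effort lies; the contrast with $p=2$ (where $\langle\theta^2\rangle$ is trivial but the required discrepancy $\theta^{-1}=\theta$ is not) shows that the splitting into two rrs-classes for even $n$ is a genuine phenomenon rather than an artefact of the argument.
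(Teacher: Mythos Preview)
Your argument for the first assertion (paratopy collapses to isotopy) is a valid self-contained substitute for the paper's one-line citation of \cite[Lem.\,15]{WI05}; the idea of transporting the symmetry of $B$ through the paratopism to produce a second transposition in the conjugate-part of the autoparatopism group of $A$ is exactly right. Your derivation of the autotopism $(\theta,\theta^{-1},\id)$ in the second part also matches the paper's computation.

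The gap is in the odd-$n$ step, but you are much closer than you think; the ``obstacle'' you identify is a phantom. You have already argued that every prime dividing $\ord(\theta)$ divides $n$, so $\ord(\theta)$ is odd. Now simply precompose the given isotopism with the autotopism $(\theta^{j},\theta^{-j},\id)$ of $A$: the new isotopism $(\theta^{j}\alpha,\theta^{-j}\beta,\gamma)$ has discrepancy $(\theta^{j}\alpha)^{-1}(\theta^{-j}\beta)=(\theta^{-2j-1})^{\alpha}$. You want this to equal $\id$, and $(\theta^{-2j-1})^{\alpha}=\id$ if and only if $\theta^{-2j-1}=\id$; the conjugation by $\alpha$ is irrelevant precisely because the target is the identity. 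Since $\ord(\theta)$ is odd, $2j+1\equiv 0\pmod{\ord(\theta)}$ is solvable, and you obtain an rrs-isotopism directly. No idempotent normalisation, no block rigidity, and no hypothesis that $\alpha$ normalise $\langle\theta\rangle$ is needed.

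For comparison, the paper does not carry out this cancellation argument at all. Instead it follows exactly the normalisation you set up---permute symbols so that $A$ and $B$ become idempotent---and then invokes the external result \cite[Lem.\,6]{WI05} that isotopic idempotent symmetric Latin squares are isomorphic (hence rrs-isotopic). Your route, once repaired as above, is more self-contained and actually proves a sharper statement: any single isotopism between symmetric squares of odd order can be corrected to an rrs-isotopism using only powers of the induced principal autotopism.
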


\begin{proof}
The fact that $A$ and $B$ are isotopic follows immediately from
\cite[Lem.\,15]{WI05}. So assume that $A(\alpha,\beta,\gamma)=B$. Since
both $A$ and $B$ are symmetric, we also have
$A(\beta,\alpha,\gamma)=B$. Hence
\begin{equation}\label{e:nontrivatp}
B=A(\alpha,\beta,\gamma)
=B(\beta^{-1},\alpha^{-1},\gamma^{-1})(\alpha,\beta,\gamma)
=B(\beta^{-1}\alpha,\alpha^{-1}\beta,\id).
\end{equation}
Now either $\alpha=\beta$, in which case $A$ and $B$ are rrs-isotopic,
or $B$ has a non-trivial autotopism of the form
$(\theta,\theta^{-1},\id)$ by \eref{e:nontrivatp}.  By replacing
$\theta$ by an appropriate power of $\theta$, we may assume that it
has prime order.  It then follows from \lref{l:class} that it is
semiregular. Hence $B\in\Omega_n$.  The proof that $A\in\Omega_n$ is
similar.

Finally, suppose that $n$ is odd. In that case, by \lref{l:obvious}\iref{i:diagsym},
we can convert $A,B$ to idempotent symmetric Latin squares
$A',B'$ by permuting symbols. By \cite[Lem.\,6]{WI05}, $A'$ is
isomorphic to $B'$. It follows that $A$ is rrs-isotopic to $B$.
\end{proof}

\lref{l:isotsymm} allowed us to count the species that contain
symmetric Latin squares of order $n$ as follows. We may assume that
$n$ is even, since otherwise
the number of species equals the number of rrs-classes, which we
have already counted. We generated $\Omega_n$ by considering the
primes $p$ that divide $n$. For each such prime we generated the
symmetric Latin squares that have an autotopism
$(\theta,\theta^{-1},\id)$ where $\theta$ is one fixed semiregular permutation
of order $p$ 
(in the case $p=2$ this is a task we have already done when
counting rrs-isotopism classes, given that $\theta^{-1}=\theta$).
The hardest case we had to handle was
$n=12$. In that case, we only need to consider $p=2$ and $p=3$.  By
\lref{l:isotsymm}, among the symmetric Latin squares of order $n$ the
number of species overall is the number of species in $\Omega_n$ plus
the number of rrs-isotopism classes outside $\Omega_n$. We got the
latter number indirectly, by counting the number of rrs-isotopism
classes in $\Omega_n$ and subtracting them from the total, which we
had already calculated. In this way, we counted the species just by
examining $\Omega_n$. Some care was required in the hardest case, when
$n=12$, because we had not kept the symmetric Latin squares with an
autotopism with cycle structure $(2^6,2^6,\id)$ unless they had an additional
rrs-autotopism. These discarded squares had the potential to appear in our
catalogue of the squares with a principal autotopism
$(\theta,\theta^{-1},\id)$ of order 3. Indeed, in that catalogue there were
40 rrs-isotopism classes (34 species) of symmetric Latin squares
that also possessed a principal autotopism of order 2. All but one
of those classes had been discarded due to having no additional rrs-autotopism.

To be extra careful, we also computed the number of species a second
(slower) way. In this approach we generated every symmetric Latin
square with one of the autotopisms in \Tref{T:agopts}, but this time
screened them for isotopism. We counted the isotopism classes with
autotopism group of order exactly 2, but discarded their
representatives. We stored representatives of all classes where the
order of the autotopism group exceeded 2 for further comparison.  In
this and other respects, the computation was similar to how we counted
rrs-isotopism classes. The result gave us independent confirmation of
the number of isotopism classes (which equals the number of species).

It is clear from \Tref{tabsym} that $n=8$ is the smallest
order for which there are isotopic symmetric Latin squares that are
not rrs-isotopic. An example of this behaviour is the following pair of 
Latin squares:
\[
\left[\begin{array}{cccccccc}
1&2&3&4&5&6&7&8\\
2&1&4&3&6&5&8&7\\
3&4&1&2&7&8&5&6\\
4&3&2&1&8&7&6&5\\
5&6&7&8&2&1&3&4\\
6&5&8&7&1&3&4&2\\
7&8&5&6&3&4&2&1\\
8&7&6&5&4&2&1&3\\
\end{array}\right]\qquad\qquad
\left[\begin{array}{cccccccc}
1&2&3&4&5&6&7&8\\
2&1&4&3&6&5&8&7\\
3&4&1&2&7&8&5&6\\
4&3&2&1&8&7&6&5\\
5&6&7&8&1&2&4&3\\
6&5&8&7&2&3&1&4\\
7&8&5&6&4&1&3&2\\
8&7&6&5&3&4&2&1\\
\end{array}\right]
\]
Applying the isotopism $\big((1423)(78),(12)(5768),(1423)(56)\big)$
to the left hand square, we see that the two squares are isotopic.
However, they are not rrs-isotopic, since the right hand square has
a symbol that appears six times on the main diagonal and the left hand
square has no such symbol.

Next, we turn to the issue of counting isomorphism classes of
symmetric Latin squares (in other words, counting commutative quasigroups
up to isomorphism). For this task we made use of the following result, which
allows us to count isomorphism classes using the catalogue of rrs-isotopism
classes that we had computed above. For permutations $\alpha,\beta\in\sym_n$
we write $\alpha^\beta$ as shorthand for $\beta^{-1}\alpha\beta$.

\begin{theorem}\label{t:rrs-isom}
 Let $L$ be a symmetric Latin square, and $\varGamma$ its rrs-autotopism group.
 Then the number of isomorphism classes of symmetric Latin squares
 rrs-isotopic to $L$ is 
 \[
 \frac{1}{\card\varGamma} 
 \sum_{\smash{(\alpha,\alpha,\beta)\in}\varGamma} \psi(\alpha,\beta),
 \]
 where
 \[
 \psi(\alpha,\beta) =
 \begin{cases}
   \prod_{i=1}^k (n_i)!\,c_i^{n_i}, & \text{if $\alpha,\beta$ have the
     same cycle structure $c_1^{n_1}\cdots c_k^{n_k}$;} \\
   0, & \text{otherwise}.
 \end{cases}
 \]
\end{theorem}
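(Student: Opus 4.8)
The plan is to recognise the quantity as an orbit count and apply the Cauchy--Frobenius (Burnside) lemma in its double-coset form. First I would record that an rrs-isotopism preserves symmetry: if $L$ is symmetric and we apply $(\alpha,\alpha,\gamma)$, the resulting triple set is again closed under swapping the first two coordinates. Hence the set $X$ of symmetric Latin squares rrs-isotopic to $L$ is exactly the rrs-isotopism class of $L$, and it is a single orbit of the group $G=\sym_n\times\sym_n$ acting by $(\alpha,\gamma)\colon M\mapsto M(\alpha,\alpha,\gamma)$. Writing $K=\{(\alpha,\gamma): L(\alpha,\alpha,\gamma)=L\}$ for the stabiliser of $L$, the map $(\alpha,\alpha,\beta)\mapsto(\alpha,\beta)$ identifies $\varGamma$ with $K$, so $X\cong G/K$ and $\card K=\card{\varGamma}$.

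Next I would interpret isomorphism as the action of the diagonal subgroup $D=\{(\delta,\delta):\delta\in\sym_n\}\le G$, since $(\delta,\delta)$ realises the isomorphism $M\mapsto M(\delta,\delta,\delta)$. Two squares of $X$ are isomorphic precisely when they lie in the same $D$-orbit, so the number we want is the number of $D$-orbits on $G/K$, i.e.\ the number of double cosets $D\backslash G/K$. The standard bijection $DgK\mapsto Kg^{-1}D$ shows this equals $\card{K\backslash G/D}$, the number of $K$-orbits on $G/D$; this inversion is exactly what turns the eventual summation index into $\varGamma$ rather than $\sym_n$, which is the whole point. Applying Cauchy--Frobenius to $K$ acting on $G/D$ by left multiplication gives
\[
\card{K\backslash G/D}=\frac{1}{\card K}\sum_{k\in K}\card{\{gD: g^{-1}kg\in D\}}.
\]

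The remaining work is to evaluate the inner count for $k=(\alpha,\beta)$. Writing $g=(\mu,\nu)$, the condition $g^{-1}kg\in D$ reads $\mu^{-1}\alpha\mu=\nu^{-1}\beta\nu$, and one checks this is constant on each coset $gD$. I would then count the pairs $(\mu,\nu)$ satisfying it: solutions exist only when $\alpha$ and $\beta$ are conjugate, i.e.\ share a cycle structure $c_1^{n_1}\cdots c_k^{n_k}$, in which case for each of the $n!$ choices of $\mu$ the admissible $\nu$ form a coset of the centraliser of $\beta$, of size $\prod_i (n_i)!\,c_i^{n_i}$. Dividing the $n!\prod_i (n_i)!\,c_i^{n_i}$ pairs by $\card D=n!$ to pass from pairs to cosets yields exactly $\psi(\alpha,\beta)$ (and $0$ when the cycle structures differ). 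Substituting this back, and using $\card K=\card{\varGamma}$ with the identification $k=(\alpha,\beta)\leftrightarrow(\alpha,\alpha,\beta)$, produces the claimed formula.

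The main obstacle I anticipate is the bookkeeping: keeping the left and right actions and the double-coset inversion straight so that the sum is genuinely indexed by $\varGamma$, and verifying that the fixed-coset count collapses to the centraliser order. The one genuinely non-formal step is recognising $\psi(\alpha,\beta)$ as the number of permutations conjugating $\alpha$ to $\beta$ (equivalently, the centraliser order when they are conjugate, and $0$ otherwise); once that observation is in place, the rest is routine orbit counting.
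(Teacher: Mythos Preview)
Your proposal is correct and follows essentially the same approach as the paper: both identify the rrs-isotopism class of $L$ as a transitive $G=\sym_n\times\sym_n$-set with stabiliser $\varGamma$, count isomorphism classes as orbits of the diagonal subgroup, and evaluate via Cauchy--Frobenius, with $\psi(\alpha,\beta)$ emerging as the number of permutations conjugating $\alpha$ to $\beta$. The only cosmetic difference is that you invoke the double-coset inversion $\card{D\backslash G/K}=\card{K\backslash G/D}$ to get the sum indexed by $\varGamma$ directly, whereas the paper applies Burnside to the diagonal group acting on $\mathcal L$ and then reindexes through a short chain of equalities to arrive at the same sum.
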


\begin{proof}
  The rrs-isotopism class of $L$ is
   $\mathcal{L} = \{ L(\sigma,\sigma,\tau) :  \sigma,\tau\in \sym_n\}$.
  Define $\varLambda=\{ (\rho,\rho,\rho) : \rho\in \sym_n\}$.
  The number $N$ of isomorphism classes in $\mathcal{L}$ is the number
  of orbits of the action of $\varLambda$ on $\mathcal{L}$.
  By the Frobenius--Burnside Lemma, we have
  \begin{align*}
      N &= \frac{1}{n!}\, 
       \sum_{\rho\in \sym_n}\,\Card{ \{M\in\mathcal{L} : M(\rho,\rho,\rho) = M\} } \\
   &= \frac{1}{n!\,\card\varGamma}\,
      \Card{ \{ \sigma,\tau,\rho\in \sym_n 
                     : L(\sigma,\sigma,\tau)(\rho,\rho,\rho) =L(\sigma,\sigma,\tau) \} }\\
   &= \frac{1}{n!\,\card\varGamma}\,
       \Card{ \{ \sigma,\tau,\rho\in \sym_n 
       : (\rho^{\sigma^{-1}},\rho^{\sigma^{-1}},\rho^{\tau^{-1}})\in\varGamma\} }
          \displaybreak[1]\\
   &= \frac{1}{n!\,\card\varGamma} \sum_{(\alpha,\alpha,\beta)\in\varGamma}
        \Card{ \{ \sigma,\tau\in \sym_n : \alpha^{\sigma}=\beta^{\tau} \} } \\
   &= \frac{1}{\card\varGamma} \sum_{(\alpha,\alpha,\beta)\in\varGamma} 
         \Card{ \{ \sigma\in \sym_n : \alpha^\sigma=\beta \} }
       = \frac{1}{\card\varGamma} 
         \sum_{(\alpha,\alpha,\beta)\in\varGamma} \psi(\alpha,\beta).\qedhere
  \end{align*}
\end{proof}

\begin{table}[htb]
\centering
\def\qd{\kern 0.53em }
\newdimen\digitwidth \setbox0=\hbox{\rm0} \digitwidth=\wd0
\catcode`@=\active \def@{\kern\digitwidth} \centerline{
\vbox{\offinterlineskip \hrule 
\halign{&\vrule#&\strut
        \qd\hfil#\hfil\qd&\vrule#&
        \qd\hfil#\hfil\qd&\vrule#&
        \qd\hfil#\hfil\qd&\vrule#\cr
height2pt&\omit&&&&&\cr
&\mybox{order}&&\mybox{isomorphism\\classes}&&\mybox{loops}&\cr
height2pt&\omit&&&&&\cr
\noalign{\hrule height0.8pt}
height2pt&\omit&&&&&\cr
&2&&1&&1&\cr
&3&&3&&1&\cr
&4&&7&&2&\cr
&5&&11&&1&\cr
&6&&491&&8&\cr
&7&&6381&&17&\cr
&8&&10940111&&2265&\cr
&9&&1225586965&&30583&\cr
&10&&130025302505741&&358335026&\cr
&11&&252282619993126717&&69522550106&\cr
&12&&2209617218725712597768722&&55355570223093935&\cr
&13&&98758655816833782283724345637&&206176045800229002160&\cr
height2pt&\omit&&&&&\cr
}\hrule}}
\caption{\label{T:comloop}Counts of symmetric Latin squares and commutative loops up to isomorphism.}
\end{table}

\tref{t:rrs-isom} allowed us to complete the middle column of
\Tref{T:comloop}. In the last column of the same table we count
isomorphism classes of symmetric reduced Latin squares (which are
equinumerous with the isomorphism classes of commutative loops).  To
do that, we applied the following result to the representatives of
rrs-isotopism classes of symmetric Latin squares, which
we found in the production of \Tref{tabsym}.

\begin{theorem}\label{t:loops}
 Let $L$ be a symmetric Latin square, and $\varGamma$ its rrs-autotopism group.
 Then the number of isomorphism classes containing reduced symmetric Latin squares
 rrs-isotopic to $L$ is
 \[
 \frac{1}{\card\varGamma} 
 \sum_{\smash{(\alpha,\alpha,\beta)\in}\varGamma} \lambda(\alpha,\beta),
 \]
 where $\lambda(\alpha,\beta)$ is the number of fixed points of $\alpha$,
 if $\alpha$ and $\beta$ have the same cycle structure, and
 $\lambda(\alpha,\beta)=0$ otherwise.
\end{theorem}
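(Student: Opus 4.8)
The plan is to mirror the proof of \tref{t:rrs-isom}, but now keeping track of the identity element of the loop. First I would reinterpret what is being counted. An isomorphism is an rrs-isotopism with $\alpha=\beta=\gamma$, so every isomorphism class lies inside a single rrs-isotopism class; thus the isomorphism classes ``rrs-isotopic to $L$'' are exactly those contained in $\mathcal{L}=\{L(\sigma,\sigma,\tau):\sigma,\tau\in\sym_n\}$. Such a class contains a reduced square if and only if, after relabelling to move an identity element to position~$1$, it becomes reduced, i.e.\ if and only if its commutative quasigroup has an identity element and so is a loop. Hence the quantity to compute, say $N'$, is the number of orbits of $\varLambda=\{(\rho,\rho,\rho):\rho\in\sym_n\}$ on the set $\mathcal{Q}\subseteq\mathcal{L}$ of those squares possessing an identity element.

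Next I would apply the Frobenius--Burnside Lemma to this action. Writing the indicator that $M$ is a loop as the number of its identity elements (which is $0$ or $1$) and lifting each $M\in\mathcal{L}$ to the $\card{\varGamma}$ pairs $(\sigma,\tau)$ with $M=L(\sigma,\sigma,\tau)$, the same manipulation as in \tref{t:rrs-isom} gives
\[
N'=\frac{1}{n!\,\card{\varGamma}}\sum_{(\alpha,\alpha,\beta)\in\varGamma}
 \Card{\{(\sigma,\tau)\in\sym_n\times\sym_n:\alpha^\sigma=\beta^\tau,\ L(\sigma,\sigma,\tau)\text{ is a loop}\}}.
\]
The only new feature compared with \tref{t:rrs-isom} is the extra constraint that $L(\sigma,\sigma,\tau)$ be a loop. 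Writing $L_i\colon j\mapsto L(i,j)$ for the rows of $L$ and $R=\{L_1,\dots,L_n\}$, a short check shows that $L(\sigma,\sigma,\tau)$ has an identity element precisely when $\tau^{-1}\sigma\in R$ (the identity then occupying row $\sigma(b)$, where $L_b=\tau^{-1}\sigma$). So the inner cardinality equals $I(\alpha,\beta):=\Card{\{(\sigma,\tau):\alpha^\sigma=\beta^\tau,\ \tau^{-1}\sigma\in R\}}$.

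The heart of the proof is then the identity $I(\alpha,\beta)=n!\,\lambda(\alpha,\beta)$. If $\alpha$ and $\beta$ have different cycle structures there is no solution of $\alpha^\sigma=\beta^\tau$, so $I(\alpha,\beta)=0=n!\,\lambda(\alpha,\beta)$, and I may assume $\alpha$ and $\beta$ are conjugate. Substituting $g=\tau^{-1}\sigma$ recasts the condition as $(\alpha^\sigma)^g=\beta^\sigma$ with $g\in R$, giving $I(\alpha,\beta)=\sum_{\sigma\in\sym_n}\Card{\{g\in R:(\alpha^\sigma)^g=\beta^\sigma\}}$. Here two structural facts must be combined. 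First, the rows $R$ of a Latin square form a sharply transitive set of permutations (equivalently, the permutation matrices of the $L_i$ sum to the all-ones matrix), which controls how $R$ meets each conjugacy class. Second, since $(\alpha,\alpha,\beta)$ is an autotopism of $L$ we have $L_{\alpha(c)}=\beta L_c\alpha^{-1}$ for all $c$; evaluating this at a fixed point $\alpha(c)=c$ gives $\beta=L_c\,\alpha\,L_c^{-1}$, so each of the $\lambda(\alpha,\beta)=\Card{\{c:\alpha(c)=c\}}$ fixed points of $\alpha$ furnishes a solution. I expect the main obstacle to be showing that, after summing over $\sigma$, sharp transitivity averages these contributions exactly, so that $I(\alpha,\beta)=n!\,\lambda(\alpha,\beta)$ with no residual dependence on the cycle structures of the individual rows of $L$; the cases $\varGamma=\{(\id,\id,\id)\}$ (which must give $N'=n$) and small $\varGamma$ serve as useful sanity checks. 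Substituting $I(\alpha,\beta)=n!\,\lambda(\alpha,\beta)$ into the displayed formula cancels the factor $n!$ and yields the claimed expression.
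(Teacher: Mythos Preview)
Your setup parallels the paper's: both arguments reduce to the claim that, for $(\alpha,\alpha,\beta)\in\varGamma$, the number of row-permutations $\xi_r$ of $L$ (your $L_r$) satisfying $\alpha^{\xi_r}=\beta$ equals the number of fixed points of~$\alpha$. The paper gets there by parameterising reducing rrs-isotopisms by a row index and an element of $\sym_{n-1}$; you get there via the loop characterisation and a change of variables. Incidentally, with the correct substitution the inner condition is already $\sigma$-independent: the loop condition is $\sigma\tau^{-1}\in R$ (not $\tau^{-1}\sigma$), and setting $g=\sigma\tau^{-1}$ turns $\alpha^\sigma=\beta^\tau$ into $\alpha^g=\beta$, so that $I(\alpha,\beta)=n!\cdot\card{\{g\in R:\alpha^g=\beta\}}$ with no averaging over $\sigma$ required.

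The genuine gap is the step you flag as the ``main obstacle''. You prove one direction---each fixed point $r$ of $\alpha$ yields $L_r\in R$ with $\alpha^{L_r}=\beta$---and then propose to close the count by ``sharp transitivity averaging''. That route fails: sharp transitivity is a property of the rows of \emph{every} Latin square and says nothing about how $R$ meets a coset of a centraliser; without using that $(\alpha,\alpha,\beta)$ is an autotopism, $\card{\{g\in R:\alpha^g=\beta\}}$ need not equal the number of fixed points of~$\alpha$. The missing converse is immediate from the very identity you wrote down. From $\alpha L_{r^\alpha}=L_r\beta$ (your $L_{\alpha(c)}=\beta L_c\alpha^{-1}$ in right-action form), if $\alpha^{L_r}=\beta$ then $\alpha^{-1}L_r\beta=L_r$, whence $L_{r^\alpha}=L_r$; since distinct rows of a Latin square are distinct permutations, $r^\alpha=r$. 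This is exactly the paper's argument (their $\xi_i$ is your $L_i$), and it completes your proof with no appeal to sharp transitivity.
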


\begin{proof}
  First note that any isomorphism between reduced Latin squares must fix
  $(1,1,1)$. Moreover, any isomorphism that fixes $(1,1,1)$ preserves
  the property of being reduced.
Let $H=\{(\rho,\rho,\rho) : \rho\in\sym_n \text{~and~} 1^\rho=1\}$ and consider $H$ acting on the set of symmetric reduced Latin squares rrs-isotopic to~$L$.
Each orbit of this action is the set of reduced Latin squares within
one of the isomorphism classes that we wish to count.
By the Frobenius--Burnside Lemma, $|H|$ times the number of orbits is equal to
the number of distinct triples $(M,\gamma,\gamma)$ such that $M$ is a reduced square
rrs-isotopic to $L$ and $(\gamma,\gamma,\gamma)$ is an automorphism of~$M$.
These triples have the form
$(L(\sigma,\sigma,\tau),\alpha^\sigma,\beta^\tau)$ where
$(\alpha,\alpha,\beta)\in\varGamma$, $\alpha^\sigma=\beta^\tau$
and $\sigma,\tau\in\sym_n$.

Now consider fixed $(\alpha,\alpha,\beta)\in\varGamma$ and consider
the ways of choosing $\sigma,\tau\in\sym_n$.
Since we need $\alpha^\sigma=\beta^\tau$, it must be that $\alpha$ and
$\beta$ have the same cycle structure, so we assume this is the case.
Suppose $L=(\ell_{ij})$ and
define $\xi_i\in\sym_n$ by $j^{\xi_i}=\ell_{ij}$ for each~$j$.
Let $\delta\in \sym_n$ fix~1.
Then for $\sigma=\xi_i(1\,\ell_{ii})\delta$ and $\tau= (1\,\ell_{ii})\delta$,
the rrs-isotopism $(\sigma,\sigma,\tau)$ reduces~$L$.
Moreover, all rrs-isotopisms that map $L$ to a reduced square can be uniquely
parameterised by $(i,\delta)$ in this way.  Since $(\alpha^\sigma,\alpha^\sigma,\beta^\tau)$
is an autotopism of $L(\sigma,\sigma,\tau)$, it remains to determine when it is an
automorphism; i.e., when $\alpha^\sigma=\beta^\tau$, equivalently 
when $\alpha^{\xi_i}=\beta$. For any~$j$, 
the triple $(i,j^\alpha,\ell_{ij^\alpha})=(i,j^\alpha,j^{\alpha\xi_i})$
appears in $L$. Also, 
$L$ contains $(i^\alpha,j^\alpha,\ell_{ij}^\beta)=(i^\alpha,j^\alpha,j^{\xi_i\beta})$
since $(\alpha,\alpha,\beta)\in\varGamma$.
However, the condition $\alpha^{\xi_i}=\beta$ implies that
$j^{\alpha\xi_i}=j^{\xi_i\beta}$. Since two triples in a Latin square cannot have
exactly two entries in common, it follows that $i=i^\alpha$.
That is, for $(\alpha^\sigma,\alpha^\sigma,\beta^\tau)$ to be an automorphism of
$L(\sigma,\sigma,\tau)$ it is necessary that $i$ is fixed by~$\alpha$.
Conversely, suppose that $i$ is fixed by~$\alpha$. Since
$(\alpha,\alpha,\beta)\in\varGamma$ we see that $L$ contains the triple
$(i^\alpha,j^\alpha,\ell_{ij}^\beta)=(i,j^\alpha,j^{\xi_i\beta})$ for all
$j$.  From the definition of $\xi_i$ it then follows that
$j^{\alpha\xi_i}=j^{\xi_i\beta}$ for all $j$ and hence $\alpha^{\xi_i}=\beta$,
as required.

By the above argument, for fixed $(\alpha,\alpha,\beta)\in\varGamma$,
there are $(n-1)!\,\lambda(\alpha,\beta)$ choices of $\sigma,\tau$ such that
$(\alpha^\sigma,\alpha^\sigma,\beta^\tau)$ is an automorphism of
$L(\sigma,\sigma,\tau)$, since we have $\lambda(\alpha,\beta)$ choices for $i$
and $(n-1)!$ choices for $\delta$.

However, different choices of $\alpha,\beta,\sigma,\tau$ may give
the same triple
$(L(\sigma,\sigma,\tau),\alpha^\sigma,\beta^\tau)$.  If
$L(\sigma_1,\sigma_1,\tau_1)=L(\sigma_2,\sigma_2,\tau_2)$ for
$\sigma_1,\tau_1,\sigma_2,\tau_2\in\sym_n$, then there is
$(\mu,\mu,\nu)\in\varGamma$ such that $\sigma_2=\mu\sigma_1$
and $\tau_2=\nu\tau_1$.  If we also have $\alpha_2^{\sigma_2}=\alpha_1^{\sigma_1}$
and $\beta_2^{\tau_2}=\beta_1^{\tau_1}$ then
$\alpha_2=\alpha_1^{\mu^{-1}}$ and $\beta_2=\beta_1^{\nu^{-1}}$.  
Furthermore, it follows from 
$(\alpha_1,\alpha_1,\beta_1),(\mu,\mu,\nu)\in\varGamma$ that
$(\alpha_2,\alpha_2,\beta_2)\in\varGamma$.
In summary, each triple $(M,\gamma,\gamma)$ such that $M$ is a reduced square
rrs-isotopic to $L$ and $(\gamma,\gamma,\gamma)$ is an automorphism of~$M$
occurs exactly $|\varGamma|$ times in our counting, once for each choice
of $(\mu,\mu,\nu)$. This completes the proof.
\end{proof}

We next turn to the enumeration of symmetric Latin squares with
additional properties.  \lref{l:obvious}\iref{i:diagsym} shows there
is no further work to do in order to count diagonal symmetric Latin
squares. It also shows that symmetric Latin squares can be idempotent
only for odd orders and can be unipotent only for even
orders. Nevertheless there are connections between the two classes, as
we see shortly.

There is a well known process called \emph{prolongation} (see, for example,
\cite{DKI}) which can be applied to any diagonal Latin square 
$L$ of order $n$ as follows. We remove the triples
\begin{equation}\label{e:before}
  T=\big\{(i,i,L[i,i]):1\le i\le n\big\}
\end{equation}
from $L$ and install in their place the triples
\begin{equation}\label{e:after}
  T^* = \big\{(i,i,n+1):1\le i\le n+1\big\} 
  \cup\big\{(i,n+1,L[i,i]),(n+1,i,L[i,i]):1\le i\le n\big\}
\end{equation}
to create a new Latin square of order $n+1$ that we will denote $L^*$.
It is immediate that $L^*$ is unipotent. Also, it is not hard
to check that $L^*$ is symmetric if and only if $L$ is symmetric.
Analogous claims also hold for semisymmetry and total symmetry.

The reverse process to prolongation, is called either
anti-prolongation or contraction \cite{DKI}.  In this process, if the
triples in \eref{e:after} are present, then we replace them by the
triples in \eref{e:before}, thereby reducing the order of the Latin
square by 1.

For $n\le13$, the first quantity listed in our next result appears in
\Tref{tabsym}, allowing us to infer all the other quantities.

\begin{theorem}\label{t:idemSLS}
Let $n$ be odd. The following objects are equinumerous:
\begin{enumerate}[label={\rm(\roman*)}]\itemsep=0pt
\item\label{i:symsp} species containing symmetric Latin squares of order $n$.
\item\label{i:diagsp} species containing diagonal symmetric Latin
  squares of order $n$,
\item\label{i:idemsp} species containing idempotent symmetric Latin
  squares of order $n$,
\item\label{i:idemisot} isotopism classes containing idempotent
  symmetric Latin squares of order $n$,
\item\label{i:idemrrs} rrs-isotopism classes containing idempotent
  symmetric Latin squares of order $n$,
\item\label{i:idemiso} isomorphism classes of idempotent
  symmetric Latin squares of order $n$,
\item\label{i:unired} isomorphism classes containing unipotent
  symmetric reduced Latin squares of order $n+1$,
\end{enumerate}
\end{theorem}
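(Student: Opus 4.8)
The plan is to prove the chain (i)=(ii)=(iii)=(iv)=(v)=(vi) by collapsing several equivalence relations on the symmetric squares of order $n$, and then to treat (vi)=(vii) separately by a prolongation argument, which is where the real work lies. First I would dispose of the species-level identities. Since $n$ is odd, \lref{l:obvious}\iref{i:diagsym} shows that every symmetric Latin square of order $n$ is diagonal, so the squares counted in (i) and (ii) form literally the same set and (i)=(ii). Permuting the symbols of a diagonal symmetric square so that its main diagonal reads $1,2,\dots,n$ yields an idempotent square; this symbol permutation is an isotopism (so it stays within the same species) and it preserves symmetry, so every species containing a symmetric square also contains an idempotent symmetric one. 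As the idempotent symmetric squares are a subset of the symmetric ones, the two families of species coincide, giving (i)=(iii). Finally, for odd $n$, \lref{l:isotsymm} shows that paratopic symmetric squares are rrs-isotopic; together with the trivial implications rrs-isotopic $\Rightarrow$ isotopic $\Rightarrow$ paratopic, this means paratopy, isotopy and rrs-isotopy all induce the same partition of the symmetric squares. Restricting that common partition to the idempotent symmetric squares gives (iii)=(iv)=(v).

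The identity (v)=(vi) rests on the fact that, among idempotent symmetric squares, rrs-isotopy already coincides with isomorphism. One direction is immediate, since an isomorphism is a special rrs-isotopism. Conversely, suppose $A$ and $B$ are idempotent symmetric with $A(\sigma,\sigma,\tau)=B$. The cell $(i,i)$ of $A$ holds symbol $i$ and is sent to the diagonal cell $(i^\sigma,i^\sigma)$ of $B$, which holds symbol $i^\tau$; since $B$ is idempotent this symbol must equal $i^\sigma$, so $i^\tau=i^\sigma$ for all $i$ and hence $\sigma=\tau$. Thus the rrs-isotopism is an isomorphism. Because isomorphism preserves both idempotence and symmetry, each rrs-isotopism class meeting the idempotent symmetric squares contains exactly one isomorphism class of them, which is (v)=(vi).

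The main obstacle is (vi)=(vii), which I would handle through a prolongation bijection. Prolongation \eref{e:before}--\eref{e:after} sends an idempotent symmetric square $L$ of order $n$ to a symmetric square $L^*$ of order $n+1$ that is unipotent with unipotent symbol $n+1$ and whose last row and column read $1,2,\dots,n+1$; contraction at the index $n+1$ inverts this, so prolongation is a bijection onto the set $\mathcal P_{n+1}$ of such squares. The crucial observation is that this bijection is isomorphism-equivariant: since every member of $\mathcal P_{n+1}$ has all of its diagonal entries equal to the unipotent symbol $n+1$, any isomorphism between two members of $\mathcal P_{n+1}$ must fix the symbol $n+1$, and restriction to $\{1,\dots,n\}$ identifies such isomorphisms with the isomorphisms of the underlying idempotent squares. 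Hence prolongation induces a bijection between isomorphism classes of idempotent symmetric squares of order $n$ and isomorphism classes meeting $\mathcal P_{n+1}$.

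To finish, conjugation by the reversal permutation $i\mapsto n+2-i$ --- a single fixed isomorphism, which therefore preserves isomorphism classes --- carries $\mathcal P_{n+1}$ bijectively onto the reduced unipotent symmetric squares supplied by \lref{l:obvious}\iref{i:obvredsym}, so the two collections of isomorphism classes are the same, establishing (vi)=(vii). I expect the delicate part to be precisely this equivariance bookkeeping, together with checking that prolongation lands in and exhausts the family $\mathcal P_{n+1}$ (that is, that the contraction of any last-row-and-column-natural unipotent symmetric square of order $n+1$ really is an idempotent symmetric square); the remaining identities are just collapses of equivalence relations licensed by the lemmas already in hand.
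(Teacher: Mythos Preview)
Your proposal is correct and follows essentially the same route as the paper. The paper likewise uses \lref{l:obvious}\iref{i:diagsym} and a symbol permutation for (i)=(ii)=(iii), invokes \lref{l:isotsymm} for (iii)=(iv)=(v), and proves (vi)=(vii) via prolongation onto the same auxiliary set $U_{n+1}=\mathcal P_{n+1}$ of unipotent symmetric squares with last row and column in natural order. Two small differences: for (v)=(vi) the paper simply cites \cite[Lem.\,6]{WI05}, whereas you supply the direct diagonal argument forcing $\sigma=\tau$ (which is precisely the content of that lemma); and where the paper merely asserts that isomorphism classes meeting $U_{n+1}$ are equinumerous with those containing reduced unipotent symmetric squares, you make this explicit via conjugation by the reversal permutation. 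Your citation of \lref{l:obvious}\iref{i:obvredsym} at that point is not really needed---the reversal does all the work---but the argument stands.
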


\begin{proof}
  By \lref{l:obvious}\iref{i:diagsym}, any symmetric Latin square of odd order is
  diagonal, which means that it can be made idempotent by permuting
  the symbols.  The equality between \iref{i:symsp}, \iref{i:diagsp}
  and \iref{i:idemsp} follows.  Also, \lref{l:isotsymm} shows that
  \iref{i:idemsp}, \iref{i:idemisot} and \iref{i:idemrrs} are equal,
  and \cite[Lem.\,6]{WI05} adds \iref{i:idemiso} to that list.

  It thus suffices to show equality between
  \iref{i:idemiso} and \iref{i:unired}. Since we will do this using
  prolongation, it will be more convenient for us to replace
  \iref{i:unired} with a related (and equinumerous) set.
  Let $U_{n+1}$ be the set of symmetric unipotent Latin squares
  of order $n+1$ that have their last row and column in natural order.

Now consider two idempotent symmetric Latin squares $L_1$ and $L_2$ of
order $n$, which we prolong to $L_1^*\in U_{n+1}$ and $L_2^*\in U_{n+1}$.
Suppose there is an isomorphism $I=(\alpha,\alpha,\alpha)$
which maps $L_1$ to $L_2$. We extend the permutation $\alpha\in\sym_n$
to a permutation $\alpha^*\in\sym_{n+1}$ by $(n+1)^\alpha=n+1$ and
define $I^*=(\alpha^*,\alpha^*,\alpha^*)$. Observe that $I$ must fix
$T$ setwise and $I^*$ must fix $T^*$ setwise, where $T$ and $T^*$ are
defined by \eref{e:before} and \eref{e:after}. Since these sets
contain the only triples changed by the prolongations it follows
easily that $I^*$ is an isomorphism from $L_1^*$ to $L_2^*$. We
conclude that the number of isomorphism classes of symmetric
idempotent Latin squares of order $n$ does not exceed the number of
isomorphism classes within $U_{n+1}$.

To show equality we make use of anti-prolongation.
Suppose that we have an isomorphism
$I^*=(\alpha^*,\alpha^*,\alpha^*)$ mapping $L_1^*\in U_{n+1}$ to $L_2^*\in
U_{n+1}$. By definition, $L_1^*[i,i]=L_2^*[i,i]=n+1$ for $1\le i\le n+1$.
Since $I^*$ is an isomorphism it must map the main diagonal of
$L_1^*$ to the main diagonal of $L_2^*$, which requires that
$(n+1)^{\alpha^*}=n+1$. Thus we can define $\alpha\in\sym_n$ as the
restriction of $\alpha^*$ to $\{1,\dots,n\}$, and define
$I=(\alpha,\alpha,\alpha)$. It is now routine to check that $I$ is an
isomorphism between the anti-prolongation $L_1$ of $L_1^*$ and the
anti-prolongation $L_2$ of $L_2^*$. Moreover, both $L_1$ and $L_2$
are symmetric idempotent Latin squares by construction.
\end{proof}

\tref{t:idemSLS} and \Tref{tabsym} combine to tell us the numbers of
isomorphism classes of unipotent symmetric Latin squares as listed in
\Tref{T:unipsym}.  For the rrs-isotopism classes we need the following
result, together with \cite{KO09}:

\begin{table}[htb]
\centering
\def\qd{\kern 0.53em }
\newdimen\digitwidth \setbox0=\hbox{\rm0} \digitwidth=\wd0
\catcode`@=\active \def@{\kern\digitwidth} \centerline{
\vbox{\offinterlineskip \hrule 
\halign{&\vrule#&\strut
        \qd\hfil#\hfil\qd&\vrule#&
        \qd\hfil#\hfil\qd&\vrule#&
        \qd\hfil#\hfil\qd&\vrule#&
        \qd\hfil#\hfil\qd&\vrule#&
        \qd\hfil#\hfil\qd&\vrule#&
        \qd\hfil#\hfil\qd&\vrule#\cr
height2pt&\omit&&&&&&&\cr
&\mybox{order}&&\mybox{isomorphism\\classes}&&\mybox{rrs-isotopism\\classes}&&\mybox{species}& \cr
height2pt&\omit&&&&&&& \cr
\noalign{\hrule height0.8pt}
height2pt&\omit&&&&&&& \cr
&2&&1&&1&&1& \cr
&4&&1&&1&&1& \cr
&6&&1&&1&&1& \cr
&8&&7&&6&&6& \cr
&10&&3460&&396&&396& \cr
&12&&6320290037&&526915620&&526915616& \cr
&14&&15859695832489637513&&1132835421602062347&&1132835421602062347& \cr
height2pt&\omit&&&&&&& \cr
}\hrule}}
\caption{\label{T:unipsym}Counts of symmetric unipotent reduced Latin squares. 
The number of species equals the number of isotopism classes by 
\lref{l:isotsymm}.
}
\end{table}

\begin{theorem}\label{t:uniSLS}
  Let $n$ be even. The following hold:
\begin{enumerate}[label={\rm(\roman*)}]\itemsep=0pt
\item\label{u:specisot} The number of species containing symmetric
  unipotent Latin squares of order $n$ equals the number of isotopism
  classes containing symmetric unipotent Latin squares of order~$n$.
\item\label{u:1fact} The number of rrs-isotopism classes containing
  symmetric unipotent Latin squares of order $n$ equals the number of
  isomorphism classes of $1$-factorisations of~$K_n$.
\item\label{u:2mod4} If $n\equiv2\bmod4$ then the numbers in parts
  \iref{u:specisot} and \iref{u:1fact} above are equal.

\item\label{u:prolong} The number of symmetric unipotent reduced Latin
  squares of order $n$ equals the number of symmetric idempotent Latin
  squares of order $n-1$ which in turn equals the number of symmetric reduced
  Latin squares of order $n-1$.

\item\label{u:prolonger} The number of symmetric unipotent Latin
  squares of order $n$ equals $n!$ times the number of symmetric reduced
  Latin squares of order $n-1$.

\item\label{u:prolongTS} The number of totally symmetric unipotent
  reduced Latin squares of order $n$ equals the number of totally
  symmetric idempotent Latin squares of order $n-1$.
\item\label{u:prolongiso} The number of isomorphism classes of
  symmetric unipotent Latin squares of order $n$ equals the number of
  isomorphism classes of symmetric Latin squares of order $n-1$.
\item\label{u:allhavered} All species, isotopism classes and
  rrs-isotopism classes that contain a symmetric unipotent Latin
  square of order $n$, contain a reduced such square.
\end{enumerate}
\end{theorem}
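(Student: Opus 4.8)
The plan is to treat the eight parts in four groups, leaning on \lref{l:isotsymm}, on the prolongation map of \eref{e:before}--\eref{e:after}, and on \tref{t:idemSLS}. Part~\iref{u:specisot} is immediate from \lref{l:isotsymm}: any two paratopic symmetric squares are isotopic, so the natural surjection from isotopism classes meeting our category to species is also injective. Part~\iref{u:allhavered} is equally short: given a symmetric unipotent square $W$, \lref{l:obvious}\iref{i:obvredsym} supplies a unique symbol permutation $\rho$ with $\rho(W)$ reduced, and $(\id,\id,\rho)$ is an rrs-isotopism that preserves unipotence; hence $\rho(W)$ is a reduced symmetric unipotent square in the rrs-isotopism class, and so the isotopism class and species, of $W$.

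For the prolongation and counting parts~\iref{u:prolong}, \iref{u:prolonger} and \iref{u:prolongTS}, I set $U_n$ to be the symmetric unipotent squares of order $n$ whose last row and column are in natural order; as in the proof of \tref{t:idemSLS} these are exactly the prolongations of the symmetric idempotent squares of order $n-1$, with anti-prolongation as inverse, so $|U_n|$ is the number of symmetric idempotent squares of order $n-1$. The reversal map $\pi\colon i\mapsto n+1-i$, applied as the isomorphism $(\pi,\pi,\pi)$, carries $U_n$ bijectively onto the reduced symmetric unipotent squares of order $n$, since it turns the constant diagonal value $n$ into $1$ and the last-row/last-column condition into the first-row/first-column condition; this proves the first equality of~\iref{u:prolong}, and the second holds because for odd order $n-1$ both ``reduced'' and ``idempotent'' select a unique representative from each symbol-permutation orbit of symmetric squares. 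For~\iref{u:prolonger} I observe that reduced symmetric squares have trivial symbol-automorphism group, by the uniqueness in \lref{l:obvious}\iref{i:obvredsym}, so every symbol-orbit of symmetric unipotent squares has size exactly $n!$ and a unique reduced member; thus the number of symmetric unipotent squares is $n!$ times the count in~\iref{u:prolong}. Part~\iref{u:prolongTS} is the same prolongation/reversal argument, now using that prolongation preserves total symmetry and that $(\pi,\pi,\pi)$, being an isomorphism, also preserves it.

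For the isomorphism-class statement~\iref{u:prolongiso} I use that, as $n-1$ is odd, every symmetric square of order $n-1$ is diagonal, and I claim $L\mapsto L^{*}$ induces a bijection on isomorphism classes onto the symmetric unipotent squares of order $n$. The crux is injectivity on classes: the prolonged square $L^{*}$ has constant diagonal value $n$, so its \emph{unique} idempotent element is the new index $n$; since isomorphisms carry idempotent elements to idempotent elements, any isomorphism between two prolongations must fix $n$ in all three coordinates and hence restricts to an isomorphism of the order-$(n-1)$ contractions. Surjectivity is routine: a symmetric unipotent square with unipotent symbol $s$ is moved into the image by an isomorphism $(\tau,\tau,\tau)$ with $s^{\tau}=n$, after which it is the prolongation of its contraction.

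The last group, parts~\iref{u:1fact} and~\iref{u:2mod4}, I expect to be the main obstacle. For~\iref{u:1fact} I would make precise the classical dictionary: after deleting the constant diagonal, each non-diagonal symbol of a symmetric unipotent square of order $n$ occupies a fixed-point-free symmetric permutation, i.e.\ a $1$-factor of $K_n$, and the $n-1$ symbols give a $1$-factorisation; an rrs-isotopism $(\sigma,\sigma,\tau)$ is precisely a vertex permutation $\sigma$ together with a relabelling $\tau$ of factors, matching isomorphism of $1$-factorisations. For~\iref{u:2mod4} I combine \lref{l:isotsymm} with a parity count. Following the proof of \lref{l:isotsymm} it suffices to rule out, for $n\equiv2\bmod4$, an autotopism $(\theta,\theta^{-1},\id)$ with $\theta\ne\id$ on a symmetric unipotent square. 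Applying it to a diagonal cell $(i,i)$ produces a cell $(i^{\theta},i^{\theta^{-1}})$ bearing the unipotent symbol; as that symbol occurs only on the diagonal we get $\theta^{2}=\id$, and \lref{l:class} makes $\theta$ a fixed-point-free involution, i.e.\ the matching $M_{\theta}=\{\{i,i^{\theta}\}\}$ with $n/2$ edges. Since the symbol part is $\id$, every $1$-factor $M$ is $\theta$-invariant, and the edges of $M$ fixed by $\theta$ are exactly the $M_{\theta}$-edges it contains; as the remaining edges pair off, their number is $\equiv n/2\pmod2$, hence odd because $n\equiv2\bmod4$. Thus each of the $n-1$ factors contains at least one edge of $M_{\theta}$, whereas $M_{\theta}$ has only $n/2$ edges in all, forcing $n\le2$; the case $n=2$ being trivial, the splitting cannot occur and the two counts coincide. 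The delicate point throughout this group is making the $1$-factorisation dictionary exact enough that rrs-isotopism, isotopism and the parity of $n/2$ can all be read off from it.
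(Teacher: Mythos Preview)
Your proof is correct and follows the paper's structure closely for most parts: \iref{u:specisot}, \iref{u:allhavered}, \iref{u:prolong}, \iref{u:prolonger}, \iref{u:prolongTS}, \iref{u:prolongiso}, and \iref{u:1fact} are handled essentially as the paper does, with only cosmetic differences (you give the explicit reversal isomorphism $\pi$ where the paper simply asserts that $U_n$ is equinumerous with the reduced symmetric unipotent squares, and you phrase the fixing of $n$ in \iref{u:prolongiso} via the unique idempotent element rather than via preservation of the diagonal symbol).

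The one genuine divergence is in \iref{u:2mod4}. Both arguments begin the same way, forcing $\theta$ to be a fixed-point-free involution. The paper then counts orbits of triples under the group generated by transposition and $(\theta,\theta,\id)$: there are $n$ orbits of size~$2$, of which $n/2$ are occupied by the unipotent symbol, leaving only $n/2$ for the remaining $n-1$ symbols; so some symbol lies entirely in size-$4$ orbits, forcing $4\mid n$. You instead work directly in the $1$-factorisation picture: each of the $n-1$ factors is $\theta$-invariant, and a parity count on its $n/2$ edges shows that when $n\equiv 2\bmod 4$ it must contain an edge of the matching $M_\theta$; since the factors are edge-disjoint and $|M_\theta|=n/2$, this forces $n-1\le n/2$. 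Your route is a clean fit with the dictionary you set up for \iref{u:1fact} and avoids introducing the orbit structure on triples; the paper's route is slightly more self-contained in that it does not appeal to the $1$-factorisation encoding. Either way the conclusion is the same.
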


\begin{proof}
  \lref{l:isotsymm} implies \iref{u:specisot}. Also, \iref{u:1fact} is
  a consequence of a standard encoding of 1-factorisations of complete
  graphs as symmetric unipotent Latin squares (as spelled out
  in~\cite{KMOW14}, for example).

  We next show \iref{u:2mod4}. The case $n=2$ is trivial, so assume
  that $n>2$.  By \lref{l:isotsymm}, it is enough to argue that if
  $\Omega_n$ contains a unipotent symmetric Latin square then
  $n\equiv0\bmod4$. Suppose that $L$ is a unipotent symmetric Latin
  square of order $n$ with an autotopism
  $(\theta,\theta^{-1},\id)$. Let $u$ be the symbol on the main
  diagonal of $L$. For $1\le i\le n$ we know that $L$ contains the
  triple $(i,i,u)$ and $(i^\theta,i^{\theta^{-1}},u)$, from which it
  follows that $\theta=\theta^{-1}$. Hence $\ord(\theta)=2$, and
  \lref{l:class} then tells us that $\theta$ has cycle structure
  $2^{n/2}$. The combined action of transposition and the autotopism
  $(\theta,\theta,\id)$ forms $n$ orbits of size 2 on triples, and all
  other orbits have size 4.  The symbol $u$ occupies $n/2$ of the
  orbits of size 2. Since $n-n/2<n-1$, there must be some symbol $s$
  that does not appear in any of the orbits of size 2. As $s$ must
  occur $n$ times in $L$, it follows that $n$ must be a multiple of
  $4$, completing the proof of \iref{u:2mod4}.

  We next justify \iref{u:prolong}. Prolongation is a bijection
  between symmetric idempotent Latin squares of order $n-1$ and
  symmetric unipotent Latin squares of order $n$ that have their
  \emph{last} row and column in natural order. It is clear that the
  latter set is equinumerous with the symmetric unipotent reduced
  Latin squares of order $n$. 

  For any symmetric idempotent Latin square of order $n-1$
  there is a unique symbol permutation that maps it to a
  symmetric reduced Latin square, namely the permutation which
  puts the first row in order. Conversely,
  for any symmetric reduced Latin square of order $n-1$
  there is a unique symbol permutation that maps it to a
  symmetric idempotent Latin square, namely the permutation which
  puts the main diagonal in order (note that $n-1$ is odd, and we have
  \lref{l:obvious}\iref{i:diagsym}). It follows that among the
  symmetric Latin squares of order $n-1$ the number of idempotent squares
  equals the number of reduced squares.
  These observations combine to prove \iref{u:prolong}.

  Symbol permutations provide an $n!$ to $1$ map from symmetric
  unipotent Latin squares of order $n$ to symmetric unipotent reduced
  Latin squares of order $n$. Hence \iref{u:prolonger} follows from
  \iref{u:prolong}.  

  The proof of \iref{u:prolongTS} is identical to the proof of the
  first claim in \iref{u:prolong}.

  For \iref{u:prolongiso} it helps to observe that
  isomorphisms can be used to change the symbol on the main diagonal
  of a unipotent Latin square into the symbol $n$. After that, the argument is
  identical to the proof that \tref{t:idemSLS}\iref{i:idemiso} and
  \tref{t:idemSLS}\iref{i:unired} are equal.
  
  To prove \iref{u:allhavered} we apply \lref{l:obvious}\iref{i:obvredsym}
  and note that symbol permutations preserve unipotence.
\end{proof}

We remark that \tref{t:uniSLS}\iref{u:allhavered} shows that the
counting problems for species, isotopism classes and rrs-isotopism
classes that contain a symmetric unipotent Latin square are unchanged
by adding an extra condition that the squares should be
reduced. However, this is not true for isomorphism classes, as can be
seen by comparing \Tref{T:comloop} to \Tref{T:unipsym}, in light of
\tref{t:uniSLS}\iref{u:prolongiso}.

It only remains to explain how we deduced the numbers of species in
\Tref{T:unipsym}. As seen in the proof of
\tref{t:uniSLS}\iref{u:2mod4}, it suffices to find the difference
between the number of species and number of rrs-isotopism classes of
unipotent symmetric Latin squares with a principal autotopism
$(\theta,\theta,\id)$ where $\theta$ has cycle type $2^{n/2}$.  We had
already generated rrs-isotopism class representatives of such squares
in the process of compiling \Tref{T:agopts}. We know from
\tref{t:uniSLS}\iref{u:2mod4} that there will be equal numbers of
species and rrs-isotopism classes when $n\equiv2\bmod4$, but we found
this was also true when $n\in\{4,8\}$. For order 12 there was a small
difference, with 4851 rrs-isotopism classes but only 4847 species of
unipotent Latin squares in $\Omega_{12}$. Thus there are 4 fewer
species than rrs-isotopism classes overall, among the unipotent
symmetric Latin squares of order 12. We finish the section by giving
an example demonstrating this phenomenon.  Let $A_{12}$ be the
unipotent symmetric Latin square below and let $B_{12}$ be the square
that is obtained from $A_{12}$ by replacing each of the four
highlighted subsquares by the other possible subsquare on the same
symbols. 
\begin{equation*}
\left[\begin{array}{cccccccccccc}
 1& 2& 3& 4& 5& 6& 7& 8& 9&10&11&12\\
 2& 1& 4& 3& 6& 5& 8& 7&10& 9&12&11\\
 3& 4& 1& 2& 7& 9& 5&11& 8&12& 6&10\\
 4& 3& 2& 1& 9& 7&11& 5&12& 8&10& 6\\
 5& 6& 7& 9& 1& 2&10&12& 3&11& 4& 8\\
 6& 5& 9& 7& 2& 1&12&10&11& 3& 8& 4\\
 7& 8& 5&11&10&12& 1& 2& \mk6& \mk4& 3& 9\\
 8& 7&11& 5&12&10& 2& 1& \mk4& \mk6& 9& 3\\
 9&10& 8&12& 3&11& \mk6& \mk4& 1& 2& \mk5& \mk7\\
10& 9&12& 8&11& 3& \mk4& \mk6& 2& 1& \mk7& \mk5\\
11&12& 6&10& 4& 8& 3& 9& \mk5& \mk7& 1& 2\\
12&11&10& 6& 8& 4& 9& 3& \mk7& \mk5& 2& 1\\
\end{array}\right]
\end{equation*}
Then $A_{12}$ is isotopic to $B_{12}$ using the isotopism
$(\alpha,\gamma,\gamma)$ where $\alpha=(3,10,7,6,11,4,9,8,5,12)$ and
$\gamma=(1,2)(3,9,7,5,11)(4,10,8,6,12)$, but $A_{12}$ is not
rrs-isotopic to $B_{12}$.

\section{Semisymmetric Latin squares}\label{s:semisym}

In this section we report on the results of our enumeration of
semisymmetric Latin squares of small order up to various notions of
equivalence, and with or without certain extra properties. We state
the results first, and then at the end of the section we offer some
discussion of how they were obtained.  We start with \Tref{tabsemi},
which gives the counts of semisymmetric Latin squares (without any
additional restrictions) of orders up to 11.

\begin{table}[hbt]
\def\qd{\quad}
\newdimen\digitwidth \setbox0=\hbox{\rm0} \digitwidth=\wd0
\catcode`@=\active \def@{\kern\digitwidth} \centerline{
\vbox{\offinterlineskip \hrule 
\halign{&\vrule#&\strut
        \qd\hfil#\hfil\qd&\vrule#&
        \qd\hfil#\hfil\qd&\vrule#&
        \qd\hfil#\hfil\qd&\vrule#&
        \qd\hfil#\hfil\qd&\vrule#&
        \qd\hfil#\hfil\qd&\vrule#&
        \qd\hfil#\hfil\qd&\vrule#\cr
height2pt&\omit&&&&&&&&&\cr
&\mybox{order}&&\mybox{isomorphism\\classes}&&\mybox{isotopism\\classes}&&\mybox{species}&&\mybox{all squares}&\cr
height2pt&\omit&&&&&&&&&\cr
\noalign{\hrule height0.8pt}
height2pt&\omit&&&&&&&&&\cr
&2&&1&&1&&1&&2&\cr
&3&&2&&1&&1&&3&\cr
&4&&3&&2&&2&&18&\cr
&5&&4&&2&&2&&120&\cr
&6&&9&&7&&7&&2880&\cr
&7&&41&&33&&28&&140256&\cr
&8&&595&&557&&366&&20782080&\cr
&9&&26620&&26511&&13899&&9569532672&\cr
&10&&3908953&&3908091&&1968997&&14175610675200&\cr
&11&&1867918845&&1867909542&&934327507&&74559788174868480&\cr
height2pt&\omit&&&&&&&&&\cr
}\hrule}}
\caption{\label{tabsemi}Semisymmetric Latin squares}
\end{table}

The isomorphism classes of semisymmetric squares for orders up to 6
were listed by Sade~\cite{Sad22}, who in the same paper gave the
species of semisymmetric squares of order~7.  Later, the same
author \cite{Sad25} listed the isomorphism classes of semisymmetric
squares of order~7. In all these cases, \Tref{tabsemi} agrees
with Sade's results.
As well as enumerating classes of general semisymmetric squares
Sade also noted how many of these squares were diagonal, idempotent
or unipotent. Motivated by his work, we also count these classes.

All triples have an orbit of length 3 under $(2,3,1)$-conjugation
except for the constant triples $(i,i,i)$ that arise from idempotent
elements. Hence the following result is an easy consequence of
counting triples.

\begin{theorem}\label{t:neccon}
  In a semisymmetric Latin square of order $n$ the number of
  idempotent elements is congruent to $n^2\bmod3$. Hence, for there to
  exist an idempotent semisymmetric Latin square of order $n$ it is
  necessary that $n\not\equiv2\bmod3$.  For there to exist a unipotent
  semisymmetric Latin square of order $n$ it is necessary that
  $n\not\equiv0\bmod3$.
\end{theorem}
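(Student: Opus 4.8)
The plan is to exploit the fact that semisymmetry makes the set of $n^2$ triples of the square invariant under the cyclic coordinate permutation $\rho\colon(i,j,k)\mapsto(j,k,i)$ induced by $(2,3,1)$-conjugation, and then to count orbits. First I would observe that $\rho$ generates a cyclic group of order $3$ acting on the set of all triples, and that by semisymmetry this group permutes the triples of the square among themselves. Since $3$ is prime, every orbit has size dividing $3$, hence size either $1$ or $3$. An orbit has size $1$ precisely when $(i,j,k)=(j,k,i)$, that is when $i=j=k$; such a constant triple $(i,i,i)$ is present in the square exactly when the element $i$ is idempotent. Thus the number of size-$1$ orbits equals the number $e$ of idempotent elements.

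Writing $t$ for the number of size-$3$ orbits and counting the $n^2$ triples orbit by orbit gives the identity $n^2=e+3t$, which immediately yields $e\equiv n^2\bmod3$. This is the first assertion of the theorem. The only facts needing verification here are that $\rho$ genuinely has order $3$ and that its fixed triples correspond bijectively to idempotent elements; both are routine, so I do not expect any real obstacle in this part of the argument.

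It then remains to specialise the congruence. For an idempotent Latin square every cell $(i,i)$ carries the symbol $i$, so every element is idempotent and $e=n$; the congruence becomes $n\equiv n^2\bmod3$, which holds for $n\equiv0,1$ but fails for $n\equiv2$, giving the necessary condition $n\not\equiv2\bmod3$. For a unipotent Latin square of order $n\ge2$ all diagonal cells carry a single symbol $u$, so the only idempotent element is $u$ (realised by the cell $(u,u)$) and $e=1$; the congruence becomes $1\equiv n^2\bmod3$, which holds for $n\equiv1,2$ but fails for $n\equiv0$, giving $n\not\equiv0\bmod3$. The one point to watch is the degenerate case $n=1$, where a single square is simultaneously idempotent and unipotent; this is harmless, since there $e=1=n$ is consistent with both specialisations and does not affect the stated conclusions.
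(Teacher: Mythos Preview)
Your argument is correct and is exactly the approach the paper takes: count the $n^2$ triples in orbits of size $1$ or $3$ under the cyclic coordinate shift, identify the size-$1$ orbits with idempotent elements, and read off the congruence. Your added care with the idempotent and unipotent specialisations (including the $n=1$ edge case) simply spells out what the paper leaves implicit.
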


\tref{t:neccon} was noted in \cite{BBW09}, which established that
idempotent semisymmetric Latin squares exist for all orders
$n\not\equiv2\bmod3$ except $n=6$.  Also unipotent semisymmetric Latin
squares exist for all orders $n\not\equiv0\bmod3$ except $n=7$.
These results are consistent with what we found in our work.

The following three tables omit orders which are immediately
eliminated by \tref{t:neccon}. For other orders up to order 12 we give
the numbers of semisymmetric squares that are, respectively, diagonal
in \Tref{T:diag} and idempotent in \Tref{T:idem}.  Similarly,
Table~\ref{T:semisymloop} gives counts up to order 13 of semisymmetric
reduced Latin squares. 

\begin{lemma}\label{l:semiuniloop}
  The number of unipotent semisymmetric Latin squares of order $n$ is
  $n$ times the number of reduced semisymmetric Latin squares of order
  $n$. The numbers of isomorphism classes, isotopism classes and
  species containing unipotent semisymmetric Latin squares of order
  $n$ are equal, respectively, to the numbers of isomorphism classes,
  isotopism classes and species containing reduced semisymmetric Latin
  squares of order $n$.  The above statements also hold with
  ``semisymmetric'' replaced throughout by ``totally symmetric''.
\end{lemma}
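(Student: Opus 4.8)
The plan is to reduce the whole statement to two facts about semisymmetry: that a reduced semisymmetric square is automatically unipotent, which is precisely \lref{l:obvious}\iref{i:obvredsemi}, and that semisymmetry pins down the row and column indexed by the diagonal symbol. First I would record the key observation. If $U$ is a unipotent semisymmetric Latin square whose main diagonal carries the symbol $u$, then $U$ contains every triple $(i,i,u)$; closing under the $(3,1,2)$- and $(2,3,1)$-conjugations forces the triples $(u,i,i)$ and $(i,u,i)$ to be present as well, so $U[u,j]=j$ and $U[i,u]=i$ for all $i,j$. Thus row $u$ and column $u$ are both the identity permutation. In particular, when $u=1$ the square is reduced; conversely a reduced semisymmetric square has $U[1,1]=1$ and is unipotent by \lref{l:obvious}\iref{i:obvredsemi}, so its diagonal symbol is $1$. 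This identifies the unipotent semisymmetric squares with diagonal symbol $1$ exactly with the reduced semisymmetric squares.

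For the first (labelled) count I would partition the unipotent semisymmetric squares of order $n$ by their diagonal symbol $u\in\{1,\dots,n\}$. Writing $\tau=(1\,u)$, the isomorphism $(\tau,\tau,\tau)$ is an involution on the set of all Latin squares that preserves both semisymmetry and unipotence, because an isomorphism acts identically on all three coordinates and hence commutes with conjugation. Since it sends diagonal symbol $u$ to diagonal symbol $1$, it restricts to a bijection between the block with diagonal symbol $u$ and the block with diagonal symbol $1$. Every block therefore has the size of the reduced block, and summing over the $n$ values of $u$ gives exactly $n$ times the number of reduced semisymmetric squares.

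For the three class statements I would isolate two facts: (i) every unipotent semisymmetric square is isomorphic to a reduced semisymmetric one, via the isomorphism $(\tau,\tau,\tau)$ above, and (ii) every reduced semisymmetric square is itself unipotent, by \lref{l:obvious}\iref{i:obvredsemi}. Because isomorphism refines isotopism, which in turn refines paratopism, a class under any of these three relations contains a unipotent semisymmetric square if and only if it contains a reduced one: fact (i) produces a reduced representative inside any class that meets the unipotent family (the isomorphism $(\tau,\tau,\tau)$ is a special isotopism and a special paratopism, so it stays within the class), while fact (ii) shows the reduced representatives are themselves unipotent. Hence the sets of isomorphism classes, of isotopism classes, and of species meeting the two families coincide, and so do their cardinalities.

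Finally, the totally symmetric case requires no new work, since the only properties of semisymmetry used above are closure of the triple set under cyclic conjugation and invariance under isomorphism, both of which hold for total symmetry; and ``reduced totally symmetric implies unipotent'' follows because total symmetry implies semisymmetry before one applies \lref{l:obvious}\iref{i:obvredsemi}. The one point that needs care --- and where a naive argument fails --- is that an ordinary symbol permutation does \emph{not} preserve semisymmetry, so one cannot simply relabel the diagonal symbol to $1$; it is essential that the forced identity structure of row $u$ and column $u$ lets the \emph{symmetric} isomorphism $(\tau,\tau,\tau)$ accomplish the relabelling while respecting the conjugate symmetry. I expect that verification --- confirming that $(\tau,\tau,\tau)$ does everything a bare symbol swap would, but legitimately --- to be the only genuinely delicate step.
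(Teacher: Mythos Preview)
Your argument is correct and follows essentially the same route as the paper: both hinge on the observation that a semisymmetric square is unipotent with diagonal symbol $u$ if and only if $u$ is a two-sided identity (your triple-chasing $(i,i,u)\Rightarrow(u,i,i),(i,u,i)$ is exactly the paper's remark that $(i,x,x)\in L\Leftrightarrow(x,x,i)\in L$), and then use the isomorphism swapping $u$ with $1$ to get the factor of $n$ and to identify the classes. Your treatment is in fact more explicit than the paper's on the isomorphism/isotopism/species claim and on why an isomorphism rather than a bare symbol permutation is required, but the underlying idea is the same.
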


\begin{proof}
Note that the presence of the triple $(i,x,x)$
implies the presence of the triple $(x,x,i)$, for each $x$, and
conversely.  Hence, semisymmetric quasigroups are unipotent if and
only if they are loops.  Table~\ref{T:semisymloop} thus also counts
unipotent semisymmetric Latin squares (semisymmetric loops), with one
caveat about the last column in the table. There are the same number
of semisymmetric loops with identity element $i$ as there are with
identity element $1$ (an isomorphism interchanging $i$ with $1$ maps
one set to the other), regardless of what value $i$ has.
Hence the total number of unipotent
semisymmetric Latin squares of order $n$ is $n$ times the number of
reduced semisymmetric Latin squares of the same order.

The above argument applies without change to totally symmetric Latin
squares.
\end{proof}

\begin{table}[hbt]
\def\qd{\quad}
\newdimen\digitwidth \setbox0=\hbox{\rm0} \digitwidth=\wd0
\catcode`@=\active \def@{\kern\digitwidth} \centerline{
\vbox{\offinterlineskip \hrule 
\halign{&\vrule#&\strut
        \qd\hfil#\hfil\qd&\vrule#&
        \qd\hfil#\hfil\qd&\vrule#&
        \qd\hfil#\hfil\qd&\vrule#&
        \qd\hfil#\hfil\qd&\vrule#&
        \qd\hfil#\hfil\qd&\vrule#\cr
        height2pt&\omit&&&&&&&&&\cr
&\mybox{order}&&\mybox{isomorphism\\classes}&&\mybox{isotopism\\classes}&&\mybox{species}&&\mybox{all squares}&\cr        
height2pt&\omit&&&&&&&&&\cr
\noalign{\hrule height0.8pt}
height2pt&\omit&&&&&&&&&\cr
&2&&0&&0&&0&&0&\cr
&3&&2&&1&&1&&3&\cr
&4&&1&&1&&1&&2&\cr
&5&&1&&1&&1&&30&\cr
&6&&0&&0&&0&&0&\cr
&7&&7&&5&&5&&3000&\cr
&8&&2&&2&&2&&20160&\cr
&9&&112&&91&&76&&19571328&\cr
&10&&2369&&2341&&1285&&8136806400&\cr
&11&&347299&&347299&&175105&&13826847640320&\cr
&12&&237570420&&237569195&&118815560&&113788019281305600&\cr
height2pt&\omit&&&&&&&&&\cr
}\hrule}}
\caption{\label{T:diag}Diagonal semisymmetric Latin squares.}
\medskip
\end{table}

On examining \Tref{T:diag} we were drawn to wonder whether
isotopic diagonal semisymmetric Latin squares of order $n\equiv 2\bmod3$
are necessarily isomorphic. It turns out that the answer is negative,
as can be seen from the following counterexample of order 35.
Consider the direct products $A_1\times B$ and $A_2\times B$ where
\[
A_1=\left[\begin{array}{ccccccc}
1&3&2&5&4&7&6\\
3&4&1&2&6&5&7\\
2&1&5&7&3&6&4\\
5&2&7&6&1&4&3\\
4&6&3&1&7&2&5\\
7&5&6&4&2&3&1\\
6&7&4&3&5&1&2\\
\end{array}\right],
\qquad
A_2=\left[\begin{array}{ccccccc}
1&7&6&2&3&4&5\\
4&2&5&3&6&7&1\\
5&4&3&7&2&1&6\\
6&1&2&4&7&5&3\\
7&3&1&6&5&2&4\\
3&5&7&1&4&6&2\\
2&6&4&5&1&3&7\\
\end{array}\right]
\]
and $B$ has order $5$ and is defined by $B_{ij}\equiv-i-j\bmod5$. By
inspection, $A_1$, $A_2$ and $B$ are diagonal semisymmetric Latin
squares. Also, applying the isotopism
$\big((256)(347),(265)(374),\id\big)$ to $A_1$, we find that it is
isotopic to $A_2$ (in fact it can be shown that both squares are
isotopic to the Cayley table of $\Z_7$). However, $A_1$ and $A_2$ are
not isomorphic, since they have different numbers of idempotent
elements ($A_1$ has one idempotent element, while $A_2$ has seven).  It
is simple to check that $A_1\times B$ and $A_2\times B$ inherit the
properties of being isotopic diagonal semisymmetric Latin
squares. They are not isomorphic, since they have different numbers of
idempotent elements.

\begin{table}[hbt]
\def\qd{\quad}
\newdimen\digitwidth \setbox0=\hbox{\rm0} \digitwidth=\wd0
\catcode`@=\active \def@{\kern\digitwidth} \centerline{
\vbox{\offinterlineskip \hrule 
\halign{&\vrule#&\strut
        \qd\hfil#\hfil\qd&\vrule#&
        \qd\hfil#\hfil\qd&\vrule#&
        \qd\hfil#\hfil\qd&\vrule#&
        \qd\hfil#\hfil\qd&\vrule#&
        \qd\hfil#\hfil\qd&\vrule#\cr
height2pt&\omit&&&&&&&&&\cr
&\mybox{order}&&\mybox{isomorphism\\classes}&&\mybox{isotopism\\classes}&&\mybox{species}&&\mybox{all squares}&\cr
height2pt&\omit&&&&&&&&&\cr
\noalign{\hrule height0.8pt}
height2pt&\omit&&&&&&&&&\cr
&3&&1&&1&&1&&1&\cr
&4&&1&&1&&1&&2&\cr
&6&&0&&0&&0&&0&\cr
&7&&4&&3&&3&&480&\cr
&9&&20&&19&&17&&2274048&\cr
&10&&241&&238&&141&&757555200&\cr
&12&&9801188&&9801140&&4905666&&4693077997977600&\cr
height2pt&\omit&&&&&&&&&\cr
}\hrule}}
\caption{\label{T:idem}Idempotent semisymmetric Latin squares.}
\end{table}

\begin{table}[hbt]
\def\qd{\quad}
\newdimen\digitwidth \setbox0=\hbox{\rm0} \digitwidth=\wd0
\catcode`@=\active \def@{\kern\digitwidth} \centerline{
\vbox{\offinterlineskip \hrule 
\halign{&\vrule#&\strut
        \qd\hfil#\hfil\qd&\vrule#&
        \qd\hfil#\hfil\qd&\vrule#&
        \qd\hfil#\hfil\qd&\vrule#&
        \qd\hfil#\hfil\qd&\vrule#&
        \qd\hfil#\hfil\qd&\vrule#\cr
height2pt&\omit&&&&&&&&&\cr
&\mybox{order}&&\mybox{isomorphism\\classes}&&\mybox{isotopism\\classes}&&\mybox{species}&&\mybox{all squares}&\cr
height2pt&\omit&&&&&&&&&\cr
\noalign{\hrule height0.8pt}
height2pt&\omit&&&&&&&&&\cr
&2&&1&&1&&1&&1&\cr
&4&&1&&1&&1&&1&\cr
&5&&1&&1&&1&&2&\cr
&7&&0&&0&&0&&0&\cr
&8&&4&&4&&3&&480&\cr
&10&&20&&20&&18&&2274048&\cr
&11&&241&&241&&143&&757555200&\cr
&13&&9801188&&9801188&&4905693&&4693077997977600&\cr
height2pt&\omit&&&&&&&&&\cr
}\hrule}}
\caption{\label{T:semisymloop}Reduced semisymmetric Latin squares.}
\end{table}

The following special case of \cite[Thm~1]{Art59}
explains why the ``isomorphism classes'' column matches
the ``isotopism classes'' column in \Tref{T:semisymloop}.

\begin{theorem}\label{t:Artzy}
Any two isotopic semisymmetric loops are isomorphic.
\end{theorem}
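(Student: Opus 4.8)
The plan is to mirror the structure of the proof of \lref{l:isotsymm}, exploiting the fact that semisymmetry is exactly invariance of the triple set under the cyclic coordinate-shift, together with the fact that a loop has a distinguished idempotent, namely its identity. Write the two loops as $(Q,\cdot)$ and $(Q',*)$ with identities $e$ and $e'$, and suppose $(\alpha,\beta,\gamma)$ is an isotopy with $\gamma(x\cdot y)=\alpha x * \beta y$ for all $x,y$. In any semisymmetric quasigroup the triple $(z,b,u)$ forces $(b,u,z)$ and $(a,z,v)$ forces $(v,a,z)$, so right and left division collapse to multiplication in reverse order: $u/b=b*u$ and $a\backslash v=v*a$. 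Equivalently, for every $t$ the translations satisfy $L_t R_t=R_t L_t=\id$, these being the semisymmetric identities $t*(x*t)=x$ and $(t*x)*t=x$. I would use these relations repeatedly in the normalizations below.

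First I would normalize using the identity elements. Setting $y=e$ and then $x=e$ in the isotopy, and writing $a'=\alpha(e)$, $b'=\beta(e)$, gives $\alpha=L'_{b'}\gamma$ and $\beta=R'_{a'}\gamma$, where the translations are taken in $(Q',*)$ and I have used $R'^{-1}_{b'}=L'_{b'}$ and $L'^{-1}_{a'}=R'_{a'}$ from the previous paragraph. Thus $\alpha$ and $\beta$ are completely determined by $\gamma$ and the two elements $a',b'\in Q'$, and evaluating the isotopy at $x=y=e$ shows $\gamma(e)=a'*b'=:c'$.

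The semisymmetry of \emph{both} squares is the decisive extra input. Since the triple sets of $(Q,\cdot)$ and $(Q',*)$ are invariant under cyclically permuting the (row, column, symbol) coordinates, conjugating $(\alpha,\beta,\gamma)$ by this coordinate-cycle yields two further isotopies $(Q,\cdot)\to(Q',*)$, namely the companion relations $\beta(x\cdot y)=\gamma x*\alpha y$ and $\alpha(x\cdot y)=\beta x*\gamma y$. Composing the first companion with the inverse of the original isotopy produces the autotopism
\[
(\gamma\alpha^{-1},\,\alpha\beta^{-1},\,\beta\gamma^{-1})
\]
of $(Q',*)$, whose three components multiply to $\id$. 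Substituting $\alpha=L'_{b'}\gamma$ and $\beta=R'_{a'}\gamma$ into the three relations and simplifying with $L'_tR'_t=\id$ should pin down the interaction of $a'$ and $b'$: I expect to obtain $a'*b'=b'*a'=c'$ together with the rotation relations $b'*c'=a'$ and $c'*a'=b'$, so that $\{e',a',b',c'\}$ carries a Klein four-group structure inside $(Q',*)$.

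The main obstacle is the final collapse: upgrading this cyclically symmetric isotopy to an actual isomorphism. The goal is to show that a suitable correction of $\gamma$ by a single translation of $(Q',*)$ produces a bijection $\phi$ with $\alpha$-, $\beta$- and $\gamma$-roles all equal to $\phi$, so that $\phi(x\cdot y)=\phi(x)*\phi(y)$. The difficulty is genuine and not merely bookkeeping: a principal isotope of a semisymmetric loop need not be semisymmetric (as one can already see in small orders, where the diagonal of the isotope fails to be constant), so the argument must use the semisymmetric hypothesis on the \emph{target} in an essential way, precisely through the identities $L'_tR'_t=\id$ and the Klein-four relations among $a',b',c'$ derived above. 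This is the step I expect to require the most care, and it is essentially the content of the special case of \cite[Thm~1]{Art59} being invoked. I would cross-check the resulting isomorphism against the small cases recorded in \Tref{T:semisymloop}, for instance order $5$, where the single isomorphism class makes the conclusion transparent, before asserting it in general.
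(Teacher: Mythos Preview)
The paper does not give its own proof of this statement; it simply records it as a special case of \cite[Thm~1]{Art59}. So there is no ``paper's proof'' to compare your argument against, and the relevant question is whether your sketch stands on its own.

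Your setup is sound. The normalisations $\alpha=L'_{b'}\gamma$ and $\beta=R'_{a'}\gamma$ are correct, the companion isotopies $(\gamma,\alpha,\beta)$ and $(\beta,\gamma,\alpha)$ genuinely arise from the cyclic invariance of both triple sets, and the resulting autotopism $(\gamma\alpha^{-1},\alpha\beta^{-1},\beta\gamma^{-1})$ of $Q'$ is correctly identified. The relations $b'*c'=a'$, $c'*a'=b'$, $a'*c'=b'$, $c'*b'=a'$, and (via the operator identity $R'_{a'}=L'_{c'}L'_{b'}$ applied to $a'$, using unipotence) $b'*a'=c'$ can indeed all be extracted, so your Klein-four expectation is justified when the elements are distinct.

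However, the proposal is not a proof: you explicitly flag the decisive step---producing an actual isomorphism from this data---as ``the main obstacle'' and then do not carry it out. Saying that it ``is essentially the content of the special case of \cite[Thm~1]{Art59} being invoked'' is a citation, not an argument, and checking small cases from \Tref{T:semisymloop} is not a substitute. Concretely, you have not exhibited any element $t\in Q'$ for which $\phi=L'_t\gamma$ (or any other explicit correction) satisfies $\phi(x\cdot y)=\phi(x)*\phi(y)$, nor have you shown why such a $t$ must exist. The Klein-four relations among $a',b',c'$ constrain those three elements but say nothing on their own about how translations by them interact with arbitrary products in $Q'$; bridging that gap is precisely Artzy's contribution. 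Until that step is written out, what you have is a promising outline rather than a proof.
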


Another pattern that became evident when we compiled
\Tref{T:idem} and \Tref{T:semisymloop} is the following.

\begin{theorem}\label{t:prolong}
The number of isomorphism classes of semisymmetric idempotent Latin
squares of order $n$ equals the number of isomorphism classes of semisymmetric
unipotent Latin squares of order $n+1$.
\end{theorem}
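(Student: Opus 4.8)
The plan is to realize the asserted equality through the \emph{prolongation} map of \eref{e:before}--\eref{e:after} and its inverse, anti-prolongation, much as in the symmetric case of \tref{t:idemSLS}, but exploiting a simplification special to semisymmetry. First I would record the structural observation that makes the correspondence clean. If $M$ is a semisymmetric unipotent Latin square of order $n+1$ whose constant diagonal symbol is $n+1$, then the triples $(i,i,n+1)$ are present for all $i$, and semisymmetry forces $(n+1,i,i)$ and $(i,n+1,i)$ to be present as well. Hence the last row and last column of $M$ are automatically in natural order, so $M$ must have precisely the shape of the prolongation triples $T^*$ in \eref{e:after} with $L[i,i]=i$. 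Anti-prolonging therefore yields a Latin square $L$ of order $n$ with $L[i,i]=i$, that is, an idempotent square, and $L$ is semisymmetric because (anti-)prolongation preserves semisymmetry. Conversely, every semisymmetric idempotent $L$ of order $n$ is diagonal by \lref{l:obvious}\iref{i:obvidemdiag}, so it prolongs to a semisymmetric unipotent square $L^*$ of order $n+1$ with diagonal symbol $n+1$. Thus prolongation is a bijection between semisymmetric idempotent squares of order $n$ and the set $\mathcal U_0$ of semisymmetric unipotent squares of order $n+1$ having diagonal symbol $n+1$.

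Next I would pass from $\mathcal U_0$ to \emph{all} semisymmetric unipotent squares of order $n+1$. Given any such square $M$ with diagonal symbol $u$, applying the isomorphism $(\rho,\rho,\rho)$ with $\rho=(u,n+1)$ produces an isomorphic square in $\mathcal U_0$; since isomorphism preserves both semisymmetry and unipotence, every isomorphism class of semisymmetric unipotent squares of order $n+1$ meets $\mathcal U_0$. It then remains to show that the prolongation bijection descends to a bijection of isomorphism classes, for which I would prove that two semisymmetric idempotent squares $L_1,L_2$ of order $n$ are isomorphic if and only if $L_1^*$ and $L_2^*$ are isomorphic. The forward direction extends an isomorphism $(\alpha,\alpha,\alpha)$ to $(\alpha^*,\alpha^*,\alpha^*)$ fixing $n+1$; because the only altered triples lie in the sets $T$ and $T^*$, which are carried to one another, this extension is an isomorphism $L_1^*\to L_2^*$, exactly as in \tref{t:idemSLS}.

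The crux, and the step I expect to need the most care, is the backward direction. Suppose $(\beta,\beta,\beta)$ is an isomorphism $L_1^*\to L_2^*$. Both squares are unipotent with constant diagonal symbol $n+1$, and an isomorphism sends diagonal cells to diagonal cells, so it must carry the constant diagonal symbol of $L_1^*$ to that of $L_2^*$; this forces $(n+1)^\beta=n+1$. Consequently $\beta$ restricts to a permutation $\alpha\in\sym_n$, and since $\beta$ fixes $n+1$ it maps the off-diagonal triples of $L_1$ to those of $L_2$ while idempotence pins down the diagonal, so $(\alpha,\alpha,\alpha)$ is an isomorphism $L_1\to L_2$. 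Combining the two directions, prolongation induces a bijection between isomorphism classes of semisymmetric idempotent squares of order $n$ and isomorphism classes of elements of $\mathcal U_0$; together with the fact that every class of semisymmetric unipotent squares of order $n+1$ meets $\mathcal U_0$, this yields the claimed equality. The one point to watch is that distinct representatives in $\mathcal U_0$ of a single order-$(n+1)$ isomorphism class must correspond, under anti-prolongation, to isomorphic order-$n$ squares; this is guaranteed precisely because any isomorphism between such representatives fixes $n+1$, which is exactly the content of the backward direction.
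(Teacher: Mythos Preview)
Your proposal is correct and follows essentially the same route as the paper: prolongation and anti-prolongation, normalising the unipotent diagonal symbol to $n+1$ via the transposition $(u,n+1)$, and showing that any isomorphism between such normalised squares fixes $n+1$ and hence restricts. Your explicit observation that semisymmetry forces the last row and column of a square in $\mathcal U_0$ into natural order (so anti-prolongation automatically yields an idempotent square) is exactly the content of the paper's triple set \eref{e:stndunip}, just phrased a bit more transparently.
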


\begin{proof}
  Consider two idempotent semisymmetric Latin squares $L_1$ and $L_2$
  of order $n$, which we prolong to unipotent semisymmetric Latin
  squares $L_1^*$ and $L_2^*$. Any isomorphism which maps $L_1$ to
  $L_2$ can be extended to an isomorphism which maps $L_1^*$ to $L_2^*$,
  for exactly the same reasons as we saw in the proof of \tref{t:idemSLS}.
  
To work in the other direction, we will make use of anti-prolongation.
First, we argue that every isomorphism class $C$
of semisymmetric unipotent squares of order $n+1$ has a representative
containing the triples
\begin{equation}\label{e:stndunip}
  \big\{(i,i,n+1),(i,n+1,i),(n+1,i,i):1\le i\le n+1\big\}.
\end{equation}
Suppose that $L\in C$ and that $L$
has the symbol $u$ in every position on its main diagonal. Let $\tau$
be the transposition $(u,n+1)\in\sym_{n+1}$. Then by applying the
isomorphism $(\tau,\tau,\tau)$ to $L$ we get a (necessarily semisymmetric)
square $L'$ containing the triples \eref{e:stndunip}.

Suppose that we have an isomorphism
$I^*=(\alpha^*,\alpha^*,\alpha^*)$ mapping $L_1^*\in C$ to $L_2^*\in
C$. By the above argument, we may assume that both squares contain the
triples in \eref{e:stndunip}. Since $I^*$ is an isomorphism it must
map the main diagonal of $L_1^*$ to the main diagonal of $L_2^*$,
which requires that $(n+1)^{\alpha^*}=n+1$. Thus we can define
$\alpha\in\sym_n$ as the restriction of $\alpha^*$ to $\{1,\dots,n\}$,
and define $I=(\alpha,\alpha,\alpha)$. It is now routine to check that
$I$ is an isomorphism between the anti-prolongation $L_1$ of $L_1^*$
and the anti-prolongation $L_2$ of $L_2^*$. Moreover, both $L_1$ and
$L_2$ will be symmetric idempotent Latin squares. The theorem follows.
\end{proof}

It is important to note that \tref{t:prolong} does not generalise to
isotopism classes or species. Consider the following pair of
semisymmetric idempotent Latin squares of order 9.
$$\left[\begin{array}{ccccccccc}
1&3&2&7&8&9&4&5&6\\
3&2&1&8&9&7&6&4&5\\
2&1&3&9&7&8&5&6&4\\
7&8&9&4&6&5&1&2&3\\
8&9&7&6&5&4&3&1&2\\
9&7&8&5&4&6&2&3&1\\
4&6&5&1&3&2&7&9&8\\
5&4&6&2&1&3&9&8&7\\
6&5&4&3&2&1&8&7&9\\
\end{array}\right]\quad\quad
\left[\begin{array}{ccccccccc}
1&3&2&7&8&9&5&6&4\\ 
3&2&1&8&9&7&4&5&6\\ 
2&1&3&9&7&8&6&4&5\\
9&7&8&4&6&5&1&2&3\\ 
7&8&9&6&5&4&3&1&2\\ 
8&9&7&5&4&6&2&3&1\\
4&6&5&2&1&3&7&9&8\\ 
5&4&6&3&2&1&9&8&7\\ 
6&5&4&1&3&2&8&7&9\\ 
\end{array}\right]$$ 
Applying the isotopism $\big((789),(465),(456)(798)\big)$
to the left hand square, we learn that the two squares are isotopic
(in fact, it can be shown that both squares are isotopic to the Cayley
table of $\Z_3\times\Z_3$). However, the prolongations of these squares to
semisymmetric unipotent squares of order 10 do not even belong to the
same species, as can easily be established by counting their
subsquares or transversals.

To finish the section we briefly describe how we obtained the data
that we have given in Tables \ref{tabsemi} to \ref{T:semisymloop}.  For
each table the first task was to find a list of representatives
of the isomorphism classes. By filtering such a list it is a simple
matter to count isotopism classes and species. Also the number of ``all
squares'' with the relevant properties can be calculated using the
orbit-stabiliser theorem, by finding the order of the automorphism
group for each isomorphism class representative. Hence the main challenge
was finding the isomorphism class representatives. To do this, we first
filled the main diagonal in all possible ways (up to isomorphism) that
were consistent with \tref{t:neccon}. Of course, for \Tref{T:idem}
and \Tref{T:semisymloop} the main diagonal was completely determined, whilst
there were more choices in the other cases. It was also helpful that for
each of the classes that we generated, membership of the class was determined
by the main diagonal, together with semisymmetry. Hence the generation
for each class was the same after the initial step. Indeed,
some of the classes that we generated are obviously subclasses of others,
and proceeding as we did allowed us to reuse results from the subclasses
without repeating the work.

After filling the main diagonal we proceeded row by row in backtrack
fashion.  Any time that a triple was added we also added every triple
which followed from it by semisymmetry.  Our two independent
computations did isomorphism screening at different points, but both
screened after filling the main diagonal and after filling the whole
square (and at some points in between).

The hardest case was the generation of unrestricted semisymmetric
Latin squares of order 11, which took roughly 4 days of computation.
There were 4586 initial options for the main diagonal. Of those, 117
turned out to have no completion to a semisymmetric Latin square. The
most productive diagonal could be completed in 35046912 non-isomorphic
ways.

\section{Correction to Sade}\label{s:Sade}

In this section we demonstrate an error which our enumerations have
uncovered in the pioneering works of Sade on semisymmetric Latin
squares. In his terminology, a \emph{left autotopism} is an isotopism from
a Latin square to its $(3,1,2)$-conjugate.  (Note that Sade described
the $(3,1,2)$-conjugate of a Latin square as its ``transpose'', but we
shall avoid this confusing name since in ordinary matrix terminology
the transpose is the $(2,1,3)$-conjugate. Also, it should be noted
that Sade expressed all his results in terms of quasigroups, but we
describe them in terms of Latin squares.) We say that a square
has a \emph{semisymmetric form} if some member of its species is
semisymmetric.  Sade observed rightly that a square may possess a left
autotopism without the square having a semisymmetric form. His error
occurred when he tried to determine the smallest order for which this
happens.

\begin{theorem}\label{t:Sade}
A necessary and sufficient condition that a Latin square $L$ has a
semisymmetric form is that $L$ possesses a left autotopism
$(\alpha,\beta,\gamma)$ such that the permutation $\gamma\beta\alpha$
has order not divisible by $3$.
\end{theorem}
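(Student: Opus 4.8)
The plan is to work entirely in the triple/paratopism picture and to read off everything from a single order-$3$ paratopism attached to a left autotopism. Recall that $L$ is semisymmetric exactly when it is fixed by the paratopism $\theta$ that cyclically permutes the three coordinates of every triple and applies no isotopism (the ``bare'' $(3,1,2)$-conjugation). Thus the semisymmetric squares are precisely $\mathrm{Fix}(\theta)$, and $L$ has a semisymmetric form iff its species meets $\mathrm{Fix}(\theta)$. By the usual stabiliser-conjugation argument, $L^g$ is semisymmetric for a paratopism $g$ iff $g\theta g^{-1}$ lies in the autoparatopism group of $L$; hence
\[
 L \text{ has a semisymmetric form} \iff \text{the autoparatopism group of } L \text{ meets the conjugacy class of } \theta .
\]

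First I would translate a left autotopism into this language. Unwinding $L(\alpha,\beta,\gamma)=L^{(3,1,2)}$, the triple $(a,b,c)$ lies in $L$ iff $(b^\beta,c^\gamma,a^\alpha)$ does; equivalently the paratopism $\phi\colon(a,b,c)\mapsto(b^\beta,c^\gamma,a^\alpha)$ is an autoparatopism of $L$, and the left autotopisms are exactly the autoparatopisms whose coordinate part is this particular $3$-cycle. A direct computation gives
\[
 \phi^3\colon (a,b,c)\longmapsto (a^{\alpha\gamma\beta},\,b^{\beta\alpha\gamma},\,c^{\gamma\beta\alpha}),
\]
so $\phi^3$ is a pure isotopism whose three components are cyclic rotations of one another; in particular $\ord(\phi)=3\,\ord(\gamma\beta\alpha)$ and the coordinate part of $\phi$ is a $3$-cycle.

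For sufficiency, suppose $m=\ord(\gamma\beta\alpha)$ is coprime to $3$. Then $\phi^m$ again has a $3$-cycle coordinate part (since $3\nmid m$) and satisfies $(\phi^m)^3=\phi^{3m}=\id$, so $\phi^m$ is an order-$3$ autoparatopism of $L$ with trivial cube. Any such element is conjugate in $\sym_n\wr\sym_3$ to the bare cyclic conjugation $\theta$, so by the displayed equivalence $L$ has a semisymmetric form. For necessity, take a semisymmetric representative $M$ of the species of $L$; then $\theta$ is an autoparatopism of $M$, so the autoparatopism group of $L$ contains a conjugate $Q$ of $\theta$. Replacing $Q$ by $Q^{-1}$ if needed so that its coordinate part is the $3$-cycle of the previous paragraph, $Q$ is a left autotopism $(\alpha,\beta,\gamma)$, and $Q^3=\id$ forces $\gamma\beta\alpha=\id$, whose order $1$ is not divisible by $3$.

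The one genuinely technical step is the claim that an order-$3$ element of $\sym_n\wr\sym_3$ with a $3$-cycle on top and trivial cube is conjugate to $\theta$; I expect this, together with fixing the composition and action conventions so that the product $\gamma\beta\alpha$ (rather than some other ordering) is the relevant invariant, to be the main obstacle. It can be settled either by quoting the standard description of conjugacy classes in a wreath product, or explicitly by untwisting: writing $\phi^m=(A,B,C;\sigma)$ with the around-cycle product of $A$, $B$, $C$ equal to $\id$ (forced by $(\phi^m)^3=\id$), one solves for an isotopism $\iota$ with $\iota^{-1}\phi^m\iota=\theta$, and then $L\iota$ is the desired semisymmetric square.
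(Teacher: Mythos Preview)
The paper does not supply its own proof of this theorem; it attributes the result to Sade~\cite{Sad21} and only quotes it in order to expose the error discussed in \sref{s:Sade}. So there is nothing to compare against, but your argument stands on its own and is correct.

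The reformulation you open with---that $L$ has a semisymmetric form iff the autoparatopism group of $L$ meets the conjugacy class of the bare coordinate $3$-cycle $\theta$---is exactly the right viewpoint, and your computation of $\phi^3$ is correct. For sufficiency, when $m=\ord(\gamma\beta\alpha)$ is coprime to~$3$, the element $\phi^m$ indeed has a $3$-cycle on top and trivial cube; for necessity, a conjugate of $\theta$ (or its inverse) in the autoparatopism group reads off directly as a left autotopism with $\gamma\beta\alpha=\id$.

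The ``technical step'' you flag is in fact elementary once written out. With $\psi=\phi^{m}$ acting as $(a,b,c)\mapsto(b^{B},c^{C},a^{A})$ and $\psi^{3}=\id$ forcing $ACB=BAC=CBA=\id$, conjugating by the isotopism $\iota=(P,Q,R)$ gives
\[
\iota^{-1}\psi\iota:(a,b,c)\longmapsto\bigl(b^{QBP^{-1}},\,c^{RCQ^{-1}},\,a^{PAR^{-1}}\bigr),
\]
so we need $P=QB$, $Q=RC$, $R=PA$. Taking $R=\id$, $Q=C$, $P=CB$ satisfies all three (the last because $CBA=\id$), and then $L\iota$ is the desired semisymmetric square. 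So no appeal to the general conjugacy theory of wreath products is needed.
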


Sade proved the above theorem in \cite{Sad21}, where he also gave an
example that he claimed was of the lowest possible
order for a square possessing a left autotopism but having no semisymmetric
form. This example, which has order 10, was reproduced in
\cite[p.~63]{DKI}.  However, as we shall now point out, Sade's claim
is false since there are 11 species of Latin squares of order 9
which have the desired properties.  One example, $L$, of such a Latin
square is given in \eref{e:SadeCE}. It is isotopic to its
$(3,1,2)$-conjugate by applying the permutation $\tau=(456)(789)$ to
its symbols. It also has an automorphism $(\tau,\tau,\tau)$.  These
two symmetries generate an autoparatopism group of order 9. Clearly
then, the three left autotopisms which $L$ possesses are all of order 3
so, by \tref{t:Sade}, $L$ has no semisymmetric form.

\begin{equation}
\left[\begin{array}{ccccccccc}
2&1&3&5&6&4&9&7&8\\
1&3&2&6&4&5&8&9&7\\
3&2&1&7&8&9&5&6&4\\
5&4&8&2&7&1&6&3&9\\
6&5&9&1&2&8&7&4&3\\
4&6&7&9&1&2&3&8&5\\
7&8&6&4&9&3&1&5&2\\
8&9&4&3&5&7&2&1&6\\
9&7&5&8&3&6&4&2&1\\
\end{array}\right]
\label{e:SadeCE}
\end{equation}

Sade's error surfaced when we compared our count of 13899 species
containing semisymmetric squares of order 9 to the number of main
classes of squares with left autotopisms. This latter number, which was
found as part of the enumeration in \cite{MMM07}, turns out to be
13910 and the 11 extra species must all be counterexamples to Sade's
claim. By comparing the corresponding numbers for smaller orders, we
can be sure that order 9 is the smallest order for which a square
without a semisymmetric form can have a left autotopism. Note that
Kolesova, Lam and Thiel \cite{KLT90} established that there are 366
species and 557 isotopism classes of squares of order 8 which possess a
left autotopism.  These numbers match the corresponding numbers of
semisymmetric squares as given in \Tref{tabsemi}.  Sade's enumeration
in \cite{Sad22} is sufficient to handle the smaller orders. Our
discovery of Sade's error was communicated to Keedwell in time for it
to be noted in \cite{KD15}.

\section{Totally symmetric squares}\label{s:totsym}

In this section we report on the results of our enumeration of totally
symmetric Latin squares of small order. Again, we save discussion of
algorithmic details until after we have given the results.  The counts
of totally symmetric squares for orders up to 13 are shown in
\Tref{tabtotsym}. Note that these results confirm and extend the
results of Bailey, Preece and Zemroch \cite{Bai1}. They listed the
isomorphism classes and species of totally symmetric squares for
orders up to $7$, and calculated the total number of such squares of
these orders.  Later, in \cite{Bai2} and \cite{Bai3} Bailey extended
these results up to order ten, although she did not count the species
for the new orders.

\begin{table}[hbt]
\def\qd{\quad}
\newdimen\digitwidth \setbox0=\hbox{\rm0} \digitwidth=\wd0
\catcode`@=\active \def@{\kern\digitwidth} \centerline{
\vbox{\offinterlineskip \hrule 
\halign{&\vrule#&\strut
        \qd\hfil#\hfil\qd&\vrule#&
        \qd\hfil#\hfil\qd&\vrule#&
        \qd\hfil#\hfil\qd&\vrule#&
        \qd\hfil#\hfil\qd&\vrule#\cr
height2pt&\omit&&&&&&&\cr
&\mybox{order}&&\mybox{isomorphism\\classes}&&\mybox{species}&&\mybox{all squares}&\cr
height2pt&\omit&&&&&&&\cr
\noalign{\hrule height0.8pt}
height2pt&\omit&&&&&&&\cr
&2&&1&&1&&2&\cr
&3&&2&&1&&3&\cr
&4&&2&&2&&16&\cr
&5&&1&&1&&30&\cr
&6&&3&&2&&480&\cr
&7&&3&&3&&1290&\cr
&8&&13&&13&&163200&\cr
&9&&12&&8&&471240&\cr
&10&&139&&139&&386400000&\cr
&11&&65&&65&&2269270080&\cr
&12&&25894&&25888&&12238171545600&\cr
&13&&24316&&24316&&149648961369600&\cr
&14&&92798256&&92798256&&8089070513113497600&\cr
&15&&122859802&&122859796&&160650421233958656000&\cr
height2pt&\omit&&&&&&&\cr
}\hrule}}
\caption{\label{tabtotsym}Totally symmetric Latin squares.}
\end{table}

A similar result to the next lemma was proved in \cite{WI05} in the
odd order case.

\begin{lemma}\label{l:princautabel}
Let $P$ be the group of principal autotopisms of a symmetric Latin
square $L$ of order $n$.  Then $P$ is abelian. Also, if
$(\alpha,\beta,\id)\in P$ then $\alpha=\beta^{-1}$ and $\ord(\alpha)$
divides $n$.
\end{lemma}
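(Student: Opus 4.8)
The plan is to exploit the defining relation of a principal autotopism together with the symmetry of $L$. Suppose $(\alpha,\beta,\id)\in P$, so that $L(\alpha,\beta,\id)=L$. Because $L$ is symmetric, $L$ also equals its $(2,1,3)$-conjugate $L^{\mathsf T}$, and conjugating the autotopism relation by transposition shows that $(\beta,\alpha,\id)$ is likewise a principal autotopism of $L$. The first step is to combine these two facts. Reading off triples: the relation $L(\alpha,\beta,\id)=L$ says that whenever $(i,j,k)$ is a triple of $L$, so is $(i^\alpha,j^\beta,k)$. Applying symmetry ($L^{\mathsf T}=L$) to the triple $(i^\alpha,j^\beta,k)$ gives $(j^\beta,i^\alpha,k)\in L$; but symmetry applied to the original triple gives $(j,i,k)\in L$, and applying $(\alpha,\beta,\id)$ to \emph{that} triple yields $(j^\alpha,i^\beta,k)\in L$. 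Comparing $(j^\beta,i^\alpha,k)$ and $(j^\alpha,i^\beta,k)$ — two triples sharing the third coordinate — and using that a Latin square cannot contain two triples agreeing in exactly two positions, I expect to force $\alpha=\beta^{-1}$. I would carry out this comparison carefully; this is the crux of the ``$\alpha=\beta^{-1}$'' claim.

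Granting $\alpha=\beta^{-1}$, the order condition should follow from \lref{l:class}, which is already available. Indeed $(\alpha,\alpha^{-1},\id)$ is a principal autotopism, and for any prime power $p^e$ dividing $\ord(\alpha)$ I can pass to a suitable power of $\alpha$ to produce a principal autotopism $(\theta,\theta^{-1},\id)$ with $\theta$ of prime order; \lref{l:class} then tells me $\theta$ has no fixed points and is semiregular, hence its common cycle length $p$ divides $n$. Assembling this over all primes dividing $\ord(\alpha)$ should give that $\ord(\alpha)$ divides $n$ — though I would need to check the prime-power (not just prime) divisibility, possibly by noting that each cycle of $\alpha$ has length dividing $n$ via the semiregularity of the prime-order powers, or by a direct counting argument that the cycles of $\alpha$ all have a common length dividing $n$.

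For the abelian claim, the key observation is that $P$ is essentially parameterised by its first coordinate: the map $(\alpha,\beta,\id)\mapsto\alpha$ is injective on $P$, since $(\alpha,\beta,\id)$ and $(\alpha,\beta',\id)$ both fixing $L$ would force $(\,\id,\beta'\beta^{-1},\id)$ to fix $L$, which is impossible unless $\beta=\beta'$ (a row-preserving, symbol-preserving isotopism fixing a Latin square must be trivial). Combined with $\beta=\alpha^{-1}$, this means $P\cong\{\alpha:(\alpha,\alpha^{-1},\id)\in P\}$ as a set, and I must show the group operation is commutative. Composition gives $(\alpha_1,\alpha_1^{-1},\id)(\alpha_2,\alpha_2^{-1},\id)=(\alpha_1\alpha_2,(\alpha_1\alpha_2)^{-1},\id)$, and since the first coordinate determines the element, commutativity of $P$ is equivalent to $\alpha_1\alpha_2=\alpha_2\alpha_1$ for all such $\alpha$. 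The plan here is to again use symmetry: I expect that the relation $\alpha=\beta^{-1}$ being forced for \emph{every} element, together with the injectivity, constrains the possible $\alpha$ enough to commute — for instance by showing each such $\alpha$ acts in a way compatible with a fixed semiregular structure.

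The main obstacle I anticipate is the abelian claim: establishing $\alpha=\beta^{-1}$ and the divisibility are fairly mechanical triple-chasing and an appeal to \lref{l:class}, but proving commutativity of $P$ may require identifying more precisely how the permutations in the first coordinate interact — likely by showing they all lie in the centraliser of one another through a symmetry-plus-Latin argument, rather than by any general group-theoretic fact.
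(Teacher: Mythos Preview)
Your argument for $\alpha=\beta^{-1}$ has a genuine gap. The two triples you compare, $(j^\beta,i^\alpha,k)$ and $(j^\alpha,i^\beta,k)$, share only the third coordinate; the Latin property forbids two triples agreeing in \emph{exactly two} coordinates, not one, so nothing forces these to coincide. The paper fixes this by choosing the triple more carefully: start with $(i,i^\alpha,k_i)\in L$, apply the autotopism to get $(i^\alpha,i^{\alpha\beta},k_i)\in L$, and apply symmetry to the original to get $(i^\alpha,i,k_i)\in L$. Now both triples have row $i^\alpha$ and symbol $k_i$, so they agree in two coordinates and must be equal, giving $i^{\alpha\beta}=i$.

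You also miss the one-line proof of commutativity. The composition of $(\alpha_1,\alpha_1^{-1},\id)$ and $(\alpha_2,\alpha_2^{-1},\id)$ is $(\alpha_1\alpha_2,\,\alpha_1^{-1}\alpha_2^{-1},\,\id)$, not $(\alpha_1\alpha_2,(\alpha_1\alpha_2)^{-1},\id)$ as you wrote. But since this composition lies in $P$, the characterisation you just proved forces its second coordinate to equal the inverse of its first: $\alpha_1^{-1}\alpha_2^{-1}=(\alpha_1\alpha_2)^{-1}=\alpha_2^{-1}\alpha_1^{-1}$, hence $\alpha_1\alpha_2=\alpha_2\alpha_1$. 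That is the entire argument; no further ``semiregular structure'' analysis is needed.

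For the order claim, the paper avoids your prime-power worry by a direct argument: let $c$ be the length of a shortest cycle of $\alpha$; then $\alpha^c$ has a fixed point and $(\alpha^c,\alpha^{-c},\id)\in P$, so \lref{l:class} forces $\alpha^c=\id$. Thus every cycle of $\alpha$ has length exactly $c$, so $\ord(\alpha)=c$ and $c\mid n$.
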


\begin{proof}
Suppose $(\alpha,\beta,\id)\in P$.  For $1\le i\le n$ define
$k_i$ to be the symbol such that $L$ contains the triple
$(i,i^\alpha,k_i)$. Since $(\alpha,\beta,\id)$ is an autotopism of $L$ we
have that $(i^\alpha,i^{\alpha\beta},k_i)\in L$, and since $L$ is symmetric
we know that $(i^\alpha,i,k_i)\in L$. Any two triples of $L$ that agree
in two coordinates must be equal, so $i^{\alpha\beta}=i$. As $i$ was
arbitrary, we see that $\alpha\beta=\id$, so $\beta=\alpha^{-1}$.

Next suppose that $(\alpha_1,\alpha_1^{-1},\id)$ and 
$(\alpha_2,\alpha_2^{-1},\id)$ are two arbitrary elements of $P$. By 
composition we know that
$(\alpha_1\alpha_2,\alpha_1^{-1}\alpha_2^{-1},\id)\in P$. Hence,
by the characterisation just shown, 
$\alpha_1\alpha_2=(\alpha_1^{-1}\alpha_2^{-1})^{-1}=\alpha_2\alpha_1$.
It follows that $P$ is abelian as claimed.

Finally, we consider the order of $\alpha$ for $(\alpha,\alpha^{-1},\id)\in P$.
Let $c$ be the length of the shortest cycle in the
cycle decomposition of $\alpha$. Then $\alpha^c$ will have fixed points
and $(\alpha^c,\alpha^{-c},\id)\in P$. So by \lref{l:class},
it follows that $\alpha^c=\id$. This means that all cycles of $\alpha$ have
length $c$. So $\ord(\alpha)=c$ and $c$ divides $n$, completing the proof.
\end{proof}

Our next result shows why in \Tref{tabtotsym} the number of
isomorphism classes always matches the number of species for orders
that are not divisible by $3$.

\begin{theorem}\label{t:isotisomtotsym}
Suppose $A$ and $B$ are totally symmetric Latin squares of order
$n\not\equiv0\bmod3$.  If $A$ and $B$ are paratopic then $A$ and $B$ are
isomorphic.
\end{theorem}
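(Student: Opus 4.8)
The plan is to start from \lref{l:isotsymm}, which (as $A$ and $B$ are symmetric and paratopic) immediately supplies an isotopism $B=A(\alpha,\beta,\gamma)$; the whole task is then to upgrade this to an isomorphism $B=A(\rho,\rho,\rho)$. The extra leverage comes from full conjugate symmetry. Because $A$ and $B$ each equal all of their conjugates, applying the transpose and the $(1,3,2)$-conjugate to the identity $B=A(\alpha,\beta,\gamma)$ gives $B=B^{(2,1,3)}=A^{(2,1,3)}(\beta,\alpha,\gamma)=A(\beta,\alpha,\gamma)$ and likewise $B=A(\alpha,\gamma,\beta)$. Comparing these with the original isotopism yields two principal autotopisms of $A$: namely $(\theta,\theta^{-1},\id)$ and $(\phi,\phi^{-1},\id)$, where $\theta=\alpha\beta^{-1}$ and $\phi=\beta\gamma^{-1}$. (The second one is obtained from the autotopism $(\id,\phi,\phi^{-1})$, read off from $A(\alpha,\beta,\gamma)=A(\alpha,\gamma,\beta)$, by cyclically rotating its coordinates, which is legitimate precisely because $A$ is totally symmetric. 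No splitting into the rrs-isotopic and $\Omega_n$ cases of \lref{l:isotsymm} is needed.)

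Next I would record two structural consequences. First, by \lref{l:princautabel} the principal autotopisms of $A$ form an abelian group in which the first coordinate of any element has order dividing $n$; hence $\theta$ and $\phi$ commute, and $\ord(\theta)$ and $\ord(\phi)$ are both coprime to $3$, using the hypothesis $n\not\equiv0\bmod3$. Second, total symmetry makes the autotopism group of $A$ invariant under permuting the three coordinates of an autotopism by any element of $\sym_3$. Applying this to $(\theta,\theta^{-1},\id)$ shows that every triple $(\theta^{a},\theta^{b},\theta^{c})$ with $a+b+c\equiv0\pmod{\ord\theta}$ is an autotopism of $A$ (the allowable exponent vectors are generated by the coordinate-permutations of $(1,-1,0)$), and similarly for $\phi$. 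Taking products, every $(\theta^{a}\phi^{a'},\theta^{b}\phi^{b'},\theta^{c}\phi^{c'})$ with $a+b+c\equiv0\pmod{\ord\theta}$ and $a'+b'+c'\equiv0\pmod{\ord\phi}$ is an autotopism of $A$.

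It then remains to choose the exponents so that some such autotopism $\Phi$ satisfies $\Phi\,(\rho,\rho,\rho)=(\alpha,\beta,\gamma)$ for a suitable $\rho$, for then $B=A(\alpha,\beta,\gamma)=A\Phi(\rho,\rho,\rho)=A(\rho,\rho,\rho)$ is an isomorphism. Writing the three coordinate equations and substituting $\theta=\alpha\beta^{-1}$, $\phi=\beta\gamma^{-1}$ (and $\theta\phi=\alpha\gamma^{-1}$), I would reduce the requirement to the congruences $3b\equiv-1\pmod{\ord\theta}$ and $3b'\equiv1\pmod{\ord\phi}$, with the remaining exponents then forced, and set $\rho=(\theta^{a}\phi^{a'})^{-1}\alpha$. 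The step on which everything hinges—and the one I expect to be the real obstacle—is exactly the solvability of these two congruences: distributing the ``defect'' evenly across the three coordinates forces an inversion of $3$ modulo $\ord\theta$ and $\ord\phi$, which is possible only because these orders divide $n$ and $3\nmid n$. This is what makes the hypothesis $n\not\equiv0\bmod3$ indispensable, paralleling the order-divisible-by-$3$ obstruction in \tref{t:Sade}. A final routine verification, using the commutativity of $\theta$ and $\phi$, confirms that the $\rho$ determined by the first coordinate equation simultaneously satisfies the other two.
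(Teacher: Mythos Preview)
Your argument is correct and follows the same overall strategy as the paper: extract principal autotopisms of $A$ from total symmetry, invoke \lref{l:princautabel} to get commutativity and orders dividing $n$ (hence coprime to $3$), and then solve a congruence modulo $3$ to manufacture an isomorphism. The execution, however, is organised differently, and the comparison is worth recording.

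The paper first replaces $B$ by an isomorph so that the symbol component of the isotopism is trivial, i.e.\ $A=B(\alpha,\beta,\id)$. This normalisation leaves only two permutations to track; the paper then shows directly that $(\alpha,\alpha^{-1},\id)$ and $(\beta,\beta^{-1},\id)$ are autotopisms of $A$, derives the single family $A=B\bigl((\alpha\beta)^{k+1},(\alpha\beta)^{k+1},(\alpha\beta)^{-2k-1}\bigr)$, and solves one congruence $3k\equiv-2\pmod{\ord(\alpha\beta)}$. Your route avoids the normalisation and works with all three of $\alpha,\beta,\gamma$, introducing $\theta=\alpha\beta^{-1}$ and $\phi=\beta\gamma^{-1}$ instead. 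You then make explicit a structural fact the paper uses only implicitly: the autotopism group of a totally symmetric square is closed under the $\sym_3$-action on coordinates, so the available autotopisms form the full exponent lattice $\{(a,b,c):a+b+c\equiv0\}$ in each of $\theta$ and $\phi$. The price is two congruences rather than one, but the gain is a cleaner, more conceptual description of exactly which autotopisms are on hand and why the obstruction is precisely divisibility by $3$. Either packaging is fine; the paper's is slightly shorter, yours is slightly more transparent about where the $3$ comes from.
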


\begin{proof}
Suppose $A$ is paratopic to $B$. Then $A$ is isotopic
to $B$, by \lref{l:TSspeciso}.
Moreover, by replacing $B$ by an isomorph of $B$ if necessary,
we may assume that $A=B(\alpha,\beta,\id)$. From total symmetry it
then follows that 
\begin{equation}\label{e:AB1}
A=B(\alpha,\beta,\id)=B(\beta,\alpha,\id)=B(\alpha,\id,\beta)=B(\id,\alpha,\beta)=B(\beta,\id,\alpha)=B(\id,\beta,\alpha).
\end{equation}
These relationships will be used repeatedly in what follows.
For starters, we have
\[
A=B(\alpha,\id,\beta)=A(\id,\alpha^{-1},\beta^{-1})(\alpha,\id,\beta)
=A(\alpha,\alpha^{-1},\id)
\] 
and a similar argument shows that
$(\beta,\beta^{-1},\id)$ is an autotopism of $A$.
Thus, by \lref{l:princautabel}, we
see that $\alpha$ and $\beta$ commute and that $\ord(\alpha)$ and
$\ord(\beta)$ both divide $n$. In particular,
$\ord(\alpha)\not\equiv0\bmod3$ and $\ord(\beta)\not\equiv0\bmod3$. Now
by \eref{e:AB1},
\begin{equation*}\label{e:AB2}
A=B(\alpha,\beta,\id)(\id,\beta^{-1},\alpha^{-1})(\alpha,\beta,\id)
(\alpha^{-1},\id,\beta^{-1})(\beta,\alpha,\id)=B(\alpha\beta,\alpha\beta,
(\alpha\beta)^{-1})
\end{equation*}
and
\[
B=B(\alpha,\beta,\id)(\id,\alpha^{-1},\beta^{-1})(\beta,\alpha,\id)
(\alpha^{-1},\id,\beta^{-1})=B(\beta,\beta,\beta^{-2}).
\]
Similarly $B=B(\alpha,\alpha,\alpha^{-2})$. So for any $k\in\Z$,
\begin{equation}\label{e:AB3}
A=B(\alpha,\alpha,\alpha^{-2})^k(\beta,\beta,\beta^{-2})^k
(\alpha\beta,\alpha\beta,(\alpha\beta)^{-1})
=B\big((\alpha\beta)^{k+1},(\alpha\beta)^{k+1},(\alpha\beta)^{-2k-1}\big)
\end{equation}
Now $\ord(\alpha\beta)$ divides the least common
multiple of $\ord(\alpha)$ and $\ord(\beta)$. In particular,
$\ord(\alpha\beta)\not\equiv0\bmod3$, so there exists $k\in\Z$
such that $k+1\equiv-2k-1\bmod\ord(\alpha\beta)$. Thus \eref{e:AB3} shows that
$A$ is isomorphic to $B$, as required.
\end{proof}

\Tref{tabtotsym} shows that the requirement for $n\not\equiv0\bmod3$ cannot
be abandoned in \tref{t:isotisomtotsym}. In fact we have:

\begin{lemma}
For any order $n\equiv0\bmod3$ there exist isotopic totally symmetric
Latin squares that are not isomorphic.
\end{lemma}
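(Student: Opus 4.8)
My plan is to exhibit, for every $n\equiv0\bmod3$, an explicit pair of totally symmetric Latin squares over $\Z_n$ (the integers modulo $n$, used here to index rows, columns and symbols) that are visibly isotopic but have different numbers of idempotent elements. For a constant $c\in\Z_n$ I would define the square $L_c$ by $L_c[x,y]=c-x-y\bmod n$. Since $c-x-y$ is a bijection in each of $x$ and $y$ separately, $L_c$ is a Latin square, and its triple set is exactly $\{(x,y,z):x+y+z\equiv c\bmod n\}$. This set is invariant under every permutation of the three coordinates because the relation $x+y+z\equiv c$ is symmetric, so $L_c$ equals all six of its conjugates; that is, $L_c$ is totally symmetric. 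These are the natural ``linear'' totally symmetric squares, of the same type as the order-$5$ square $B$ appearing in the order-$35$ example of \sref{s:semisym}.

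I would then take the two squares $L_0$ and $L_1$. They are isotopic: applying the symbol translation $\gamma\colon z\mapsto z+1$ gives $L_0(\id,\id,\gamma)[x,y]=(-x-y)+1=1-x-y=L_1[x,y]$, so $L_0(\id,\id,\gamma)=L_1$. To separate them up to isomorphism I would count idempotent elements, using that this count is an isomorphism invariant (as noted in \sref{Sintro}). An element $x$ is idempotent in $L_c$ exactly when $L_c[x,x]=x$, i.e.\ when $3x\equiv c\bmod n$. This is where the hypothesis $3\mid n$ enters: the map $x\mapsto3x$ on $\Z_n$ has image equal to the subgroup of multiples of $3$, so $3x\equiv c$ has three solutions when $3\mid c$ and none otherwise. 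Hence $L_0$ has exactly three idempotent elements while $L_1$ has none, and therefore $L_0$ and $L_1$ are not isomorphic.

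There is no genuinely hard step in this argument; the only facts to verify are the two easy identities $L_0(\id,\id,\gamma)=L_1$ and that the idempotent counts are $3$ and $0$. The single point deserving care is the role of $3\mid n$, which is precisely what forces the two idempotent counts to differ: if instead $3\nmid n$ then $x\mapsto3x$ is a bijection of $\Z_n$, every $L_c$ has exactly one idempotent element, and no invariant of this kind could separate them --- entirely consistent with \tref{t:isotisomtotsym}, which forbids any such counterexample when $n\not\equiv0\bmod3$. One could additionally remark that, by \tref{t:neccon}, the number of idempotent elements of any semisymmetric square of order $n\equiv0\bmod3$ is a multiple of $3$, so the values $3$ and $0$ realised by $L_0$ and $L_1$ are exactly the two smallest admissible counts.
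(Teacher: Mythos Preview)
Your proof is correct and is essentially the same as the paper's: the paper uses exactly the pair $A=L_0$ and $B=L_1$ on $\Z_n$, shows they are isotopic via the symbol shift $x\mapsto x+1$, and distinguishes them by counting idempotent elements (three versus zero). Your added remarks about the role of $3\mid n$ and the consistency with \tref{t:isotisomtotsym} are accurate embellishments but do not change the underlying argument.
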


\begin{proof}
Define Latin squares $A,B$ on symbols $\Z_n$ by
$A[i,j]\equiv-i-j\bmod n$ and $B[i,j]\equiv1-i-j\bmod n$. It is easy to
check that both $A$ and $B$ are totally symmetric.
Clearly $A$ is isotopic to $B$ via the symbol permutation $x\mapsto
x+1\bmod n$.  However $A$ and $B$ are not isomorphic. To see this, note
that $A$ has 3 idempotent elements because
$A[cn/3,cn/3]\equiv-2cn/3\equiv cn/3\bmod n$ for
$c\in\{0,1,2\}$. However, $B$ has no idempotent elements since 
$B[i,i]=1-2i\not\equiv i\bmod n$ for all $i\in\Z_n$. 
\end{proof}

Our final result discusses idempotent totally symmetric Latin squares,
and the different guises in which they appear.

\begin{theorem}\label{t:STS}
Let $n$ be given. The following objects are equinumerous:
\begin{enumerate}[label={\rm(\roman*)}]\itemsep=0pt
\item\label{i:TSSTS} Isomorphism classes of Steiner triple systems on
  $n$ points,
\item\label{i:TSidemisom} Isomorphism classes of idempotent totally
  symmetric Latin squares of order $n$,
\item\label{i:TSidemisot} Isotopism classes containing idempotent totally
  symmetric Latin squares of order $n$,
\item\label{i:TSidemspec} Species containing idempotent totally symmetric
  Latin squares of order $n$.
\item\label{i:TSloop} Isomorphism classes of totally symmetric loops 
  of order $n+1$,
\item\label{i:TSuniisom} Isomorphism classes of totally symmetric unipotent
  Latin squares of order $n+1$,
\item\label{i:TSredisom} Isomorphism classes containing totally
  symmetric reduced Latin squares of order $n+1$,
\item\label{i:TSuniisot} Isotopism classes containing totally
  symmetric unipotent Latin squares of order $n+1$,
\item\label{i:TSredisot} Isotopism classes containing totally
  symmetric reduced Latin squares of order $n+1$,
\item\label{i:TSunispec} Species containing totally symmetric unipotent
  Latin squares of order $n+1$,
\item\label{i:TSredspec} Species containing totally symmetric reduced
  Latin squares of order $n+1$,
\end{enumerate}
\end{theorem}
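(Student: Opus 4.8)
The plan is to split the eleven objects into an ``order-$n$'' block \iref{i:TSSTS}--\iref{i:TSidemspec} of idempotent squares and an ``order-$(n+1)$'' block \iref{i:TSloop}--\iref{i:TSredspec} of unipotent/reduced/loop squares, prove a short chain of equalities within each block, and then join the two blocks by prolongation. Before anything else I would dispose of the degenerate orders: each of the eleven objects forces the existence of a Steiner triple system on $n$ points (items \iref{i:TSidemisom}--\iref{i:TSidemspec} are idempotent totally symmetric squares, i.e.\ Steiner quasigroups, and every order-$(n+1)$ item contracts to one by anti-prolongation). Hence if $n\not\equiv1,3\bmod3$ — more precisely if $n\not\equiv1,3\bmod6$ — all eleven counts vanish and there is nothing to prove, so I may assume $n\equiv1$ or $3\bmod6$. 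The decisive numerical consequence, used repeatedly below, is that then $n+1\equiv2$ or $4\bmod6$, so $n+1\not\equiv0\bmod3$ even though $n$ itself may be divisible by $3$.

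For the order-$n$ block, the equality of \iref{i:TSSTS} and \iref{i:TSidemisom} is the standard isomorphism-preserving correspondence between Steiner triple systems and idempotent Steiner quasigroups. Since a totally symmetric square is in particular symmetric, two isotopic idempotent totally symmetric squares are isotopic idempotent \emph{symmetric} squares, hence isomorphic by \cite[Lem.\,6]{WI05}; thus each isotopism class contains a single isomorphism class of such squares, giving \iref{i:TSidemisom}$=$\iref{i:TSidemisot}. Finally \lref{l:TSspeciso} says the species of a totally symmetric square is a single isotopism class, so \iref{i:TSidemisot}$=$\iref{i:TSidemspec}.

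For the order-$(n+1)$ block I would first record two structural facts: as in the proof of \lref{l:semiuniloop}, a totally symmetric square is unipotent if and only if it is a loop (giving \iref{i:TSloop}$=$\iref{i:TSuniisom}); and a totally symmetric unipotent square of order $n+1$ with diagonal symbol $u$ is carried to a reduced one by the isomorphism $(\tau,\tau,\tau)$ with $\tau=(1\,u)$, so every isomorphism class of totally symmetric unipotent squares contains a reduced representative, giving \iref{i:TSuniisom}$=$\iref{i:TSredisom}. Now, because $n+1\not\equiv0\bmod3$, \tref{t:isotisomtotsym} applies at order $n+1$: paratopic totally symmetric squares of that order are isomorphic, so within any isotopism (hence any paratopism) class all totally symmetric members are mutually isomorphic and, isomorphism preserving unipotence, are all unipotent once one of them is. Together with \lref{l:TSspeciso} this collapses \iref{i:TSuniisom}--\iref{i:TSredspec} into one count, identifying isomorphism with isotopism with species, and ``unipotent'' with ``reduced'' at the level of classes.

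It remains to join the two blocks, which I would do exactly as in \tref{t:prolong} and \tref{t:idemSLS}: prolongation and its inverse give a bijection on isomorphism classes between idempotent totally symmetric squares of order $n$ and totally symmetric unipotent squares of order $n+1$, since an isomorphism $(\alpha,\alpha,\alpha)$ extends (fixing $n+1$) to the prolongations and any isomorphism of the prolongations must fix the constant diagonal symbol and so restrict to the contractions. This yields \iref{i:TSidemisom}$=$\iref{i:TSuniisom} and completes every equality. The step I expect to require the most care is \iref{i:TSidemisom}$=$\iref{i:TSidemisot} when $n\equiv3\bmod6$: there $n\equiv0\bmod3$, so \tref{t:isotisomtotsym} is \emph{unavailable} at order $n$, and indeed totally symmetric squares of order divisible by $3$ can be isotopic without being isomorphic. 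What saves the idempotent case is precisely that \cite[Lem.\,6]{WI05} needs only symmetry and idempotency, not a congruence on $n$; and what makes the order-$(n+1)$ block behave uniformly is the complementary observation that $n+1$ is never divisible by $3$ when an $\mathrm{STS}(n)$ exists, so \tref{t:isotisomtotsym} is always applicable one order up.
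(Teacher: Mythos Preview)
Your proof is correct, and the order-$n$ block and the prolongation bridge are handled exactly as in the paper. The one genuine point of divergence is how you collapse the order-$(n{+}1)$ block. The paper does not invoke \tref{t:isotisomtotsym} here at all; instead it cites Artzy's theorem (\tref{t:Artzy}) that isotopic semisymmetric loops are isomorphic, which applies to totally symmetric unipotent squares of \emph{every} order, and then uses \lref{l:TSspeciso} and \lref{l:semiuniloop} to finish. Your route---observing that an $\mathrm{STS}(n)$ forces $n+1\not\equiv0\bmod3$ and then applying \tref{t:isotisomtotsym} at order $n+1$---is a nice alternative: it makes the proof self-contained within the paper and highlights the pleasant arithmetic accident that the ``bad'' congruence class for \tref{t:isotisomtotsym} never arises one order up from a Steiner order. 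The trade-off is that your argument genuinely needs the preliminary reduction to $n\equiv1,3\bmod6$, whereas the paper's chain of citations works uniformly and never has to split into cases. Your explicit handling of the degenerate orders and your direct $\tau=(1\,u)$ argument for \iref{i:TSuniisom}$=$\iref{i:TSredisom} are fine and amount to unpacking what \lref{l:semiuniloop} already says.
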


\begin{proof} 
The correspondence between Steiner triple systems on $n$ points and
idempotent totally symmetric Latin squares of order $n$ is well
known (see, for example, \cite[Thm~2.2.3]{DKI}). Hence
$\iref{i:TSSTS}=\iref{i:TSidemisom}$. To see that
$\iref{i:TSidemisom}=\iref{i:TSuniisom}$ we use the same argument
used to prove \tref{t:prolong}.
Also, \cite[Lem.\,6]{WI05} shows
that $\iref{i:TSidemisom}=\iref{i:TSidemisot}$ and \tref{t:Artzy} shows that
$\iref{i:TSuniisom}=\iref{i:TSuniisot}$ and $\iref{i:TSredisom}=\iref{i:TSredisot}$.
\lref{l:TSspeciso} shows that
$\iref{i:TSidemisot}=\iref{i:TSidemspec}$,
$\iref{i:TSuniisot}=\iref{i:TSunispec}$ and
$\iref{i:TSredisot}=\iref{i:TSredspec}$.
\lref{l:semiuniloop} tells us that
$\iref{i:TSuniisom}=\iref{i:TSredisom}$,
$\iref{i:TSuniisot}=\iref{i:TSredisot}$ and
$\iref{i:TSunispec}=\iref{i:TSredspec}$.
Finally, $\iref{i:TSloop}=\iref{i:TSredisom}$ follows from the definition
of a loop, and the fact that any loop has an isomorph in which 1 is the
identity element.
\end{proof}

Using \tref{t:STS} and the known results on enumeration of Steiner
triple systems \cite{KO04}, we immediately have the numbers of
isomorphism classes shown in \Tref{tabSTS} (hence we performed no
computations for this table). Orders below 19 which are not shown in
the table are known not to have any Steiner triple systems.

\begin{table}
\def\qd{\quad}
\newdimen\digitwidth \setbox0=\hbox{\rm0} \digitwidth=\wd0
\catcode`@=\active \def@{\kern\digitwidth} \centerline{
\vbox{\offinterlineskip \hrule 
\halign{&\vrule#&\strut
        \qd\hfil#\hfil\qd&\vrule#&
        \qd\hfil#\hfil\qd&\vrule#&
        \qd\hfil#\hfil\qd&\vrule#\cr
height2pt&\omit&&&&&\cr
&order&&species&&all squares&\cr
height2pt&\omit&&&&&\cr
\noalign{\hrule height0.8pt}
height2pt&\omit&&&&&\cr
&3&&1&&1&\cr
&7&&1&&30&\cr
&9&&1&&840&\cr
&13&&2&&1197504000&\cr
&15&&80&&60281712691200&\cr
&19&&11084874829&&1348410350618155344199680000&\cr
height2pt&\omit&&&&&\cr
}\hrule}}
\caption{\label{tabSTS}Number of totally symmetric idempotent Latin squares.
This table also counts other objects, by \tref{t:STS}.}
\end{table}

We finish by briefly describing the computations which produced
\Tref{tabtotsym}. These were similar to the computations for semisymmetric
Latin squares in \sref{s:semisym}. We first installed the entries on
the main diagonal (and any entries they implied), and
screened for isomorphism.
All other entries come in sets of six: $L[i,j]=L[j,i]=k$,
$L[i,k]=L[k,i]=j$ and $L[j,k]=L[k,j]=i$ for distinct $i,j,k$.  These
were filled in one 6-tuple at a time in backtracking fashion, while
respecting the Latin property. Just as we did for semisymmetric Latin
squares, our two independent computations screened for isomorphism at
different points by canonically labelling and sorting down to
inequivalent subcases.

The various types of equivalence of Latin squares can be tested by
converting the squares to graphs, as described
in~\cite[Theorem~7]{MMM07}, and processed using
\texttt{nauty}~\cite{Nauty}.  With squares having conjugate symmetry,
we can often take advantage of the symmetry to construct a smaller
graph, which allows faster processing.  We illustrate with the example
of testing isomorphism of a totally symmetric square~$L$ of order~$n$.
Define a directed graph $G(L)$ with vertices $V_1\cup V_2$ where
$V_1=\{1,\ldots,n\}$.  There is a directed edge $(u,v)$ for each
triple $(u,u,v)$ with $u\ne v$.  For each $\{u,v,w\}$ such that
$(u,v,w)$ is a triple of $L$ and $|\{u,v,w\}|=3$, there is one vertex in
$V_2$ adjacent to $u$, $v$ and~$w$.  The vertices of $V_1$ are
coloured differently from the vertices in~$V_2$.  It is clear that $L$
can be uniquely reconstructed from~$G(L)$ (there is a triple $(u,u,u)$
for each $u\in V_1$ with no directed edge leaving $u$).  Moreover, totally
symmetric Latin squares $L_1$ and $L_2$ are isomorphic if and only if
$G(L_1)$ and $G(L_2)$ are isomorphic as coloured graphs.  Relabelling
$L$ according to the order induced on $V_1$ by a canonical labelling
of $G(L)$ produces a canonical representative of the isomorphism class
of~$L$.

Since the graphs produced by these constructions tend to be highly
regular, the efficiency of \texttt{nauty} can be enhanced by using
invariants to separate inequivalent vertices.  Two invariants that
proved useful were the cycle structure of the rows (columns, symbols)
relative to other rows (resp. columns, symbols), and the distribution
of intercalates ($2\times 2$ Latin subsquares).

  \let\oldthebibliography=\thebibliography
  \let\endoldthebibliography=\endthebibliography
  \renewenvironment{thebibliography}[1]{%
    \begin{oldthebibliography}{#1}%
      \setlength{\parskip}{0.4ex plus 0.1ex minus 0.1ex}%
      \setlength{\itemsep}{0.4ex plus 0.1ex minus 0.1ex}%
  }%
  {%
    \end{oldthebibliography}%
  }

\subsection*{Acknowledgements}
The authors are very grateful to Petr Vojt\v{e}chovsk\'y and Michael Kinyon
for alerting them to reference \cite{Art59}.

\end{document}